\title[An informal introduction to dg categories]{An informal introduction to dg categories}
\thanks{\today}
\author[Xiaofa Chen, Xiao-Wu Chen]{Xiaofa Chen, Xiao-Wu Chen}
\address{Key Laboratory of Wu Wen-Tsun Mathematics, Chinese Academy of Sciences
\\ School of Mathematical Sciences, University of Science and Technology of China, Hefei 230026, Anhui, PR China}
\email{cxf2011@mail.ustc.edu.cn, xwchen@mail.ustc.edu.cn}
\theoremstyle{plain}
\newtheorem{lem}{Lemma}[subsection]
\newtheorem{prop}[lem]{Proposition}
\newtheorem{cor}[lem]{Corollary}
\newtheorem{thm}[lem]{Theorem}
\theoremstyle{remark}
\theoremstyle{definition}
\newtheorem{rem}[lem]{Remark}
\newtheorem{exm}[lem]{Example}
\newtheorem{defn}[lem]{Definition}
\numberwithin{equation}{subsection}
\begin{document}

\begin{abstract}
In this informal introduction to dg categories,  the slogan is that dg categories are more rudimentary than triangulated categories. We recall some details on the dg quotient category introduced by Bernhard Keller and Vladimir Drinfeld.
\end{abstract}

\maketitle

\setcounter{tocdepth}{2}
\tableofcontents

\section{Introduction}\label{se:intro}

Why dg categories?  There are two levels of reasons!

\vskip 10pt

\noindent {\bf Level One.}
\vskip 10pt

\begin{enumerate}
\item[$\bullet$] a dg category = a dg algebra with several objects
\item[$\bullet$] the dg endomorphism algebras of certain complexes arise naturally in the derived Morita theory and Koszul duality
\item[$\bullet$] each triangulated category appearing naturally in algebra is essentially a triangulated subcategory of $\mathbf{D}(\mathcal{A})$, the derived category of dg modules over a dg category $\mathcal{A}$
\end{enumerate}

\vskip 10pt

\noindent {\bf Level Two.}
\vskip 10pt

\begin{enumerate}
\item[$\bullet$] the non-functoriality of cones in a triangulated category causes many problems, e.g.,  tensor products and functor categories between triangulated categories are not triangulated
\item[$\bullet$] any exact (= strongly pretriangulated) dg category $\mathcal{A}$ has functorial cones, and gives rise to a triangulated category $H^0(\mathcal{A})$, by taking cohomologies
\item[$\bullet$] tensor products and functor categories of dg categories are naturally dg
\item[$\bullet$] the homotopy category $\mathbf{Hodgcat}$ of small dg categories provides a suitable framework
\end{enumerate}

\vskip 10pt

For algebraists, the slogan is

$${\boxed{\bf \mbox{dg categories are more rudimentary  than triangulated categories!}}}$$

\vskip 10pt

The main references are \cite{Ke94, Dri, Ke06}. This note might serve as a preparation to read the survey \cite{Ke06}. The dg quotient category is introduced in \cite{Ke99, Dri}. We work over a commutative ring $k$.

\section{Preliminaries}

In this section, we recall basic facts on dg categories.

\subsection{The definitions and examples}

\begin{defn}
A \emph{dg category} $\mathcal{A}$ is a category subject to
\begin{enumerate}
\item[$\bullet$] ${\rm obj}(\mathcal{A})$: usually denoted by $x, y, z \cdots$
\item[$\bullet$] each Hom set $\mathcal{A}(x, y)$ is a complex, denoted by
$$\mathcal{A}(x, y)=(\bigoplus_{p\in \mathbb{Z}} \mathcal{A}(x, y)^p, d_\mathcal{A}).$$
The element $f\in \mathcal{A}(x, y)^p$ is homogeneous of degree $p$ with the notation $|f|=p$,
\end{enumerate}
such that the composition
$$\mathcal{A}(y, z)\otimes \mathcal{A}(x, y)\longrightarrow  \mathcal{A}(x, z)$$
is a chain map. In more details, for any homogeneous morphisms $f\colon x\rightarrow y$ and $g\colon y\rightarrow z$, we have
$$|g\circ f|=|g|+|f| \mbox{ and } d_\mathcal{A}(g\circ f)=d_\mathcal{A}(g)\circ f +(-1)^{|g|} g\circ d_\mathcal{A}(f).$$
\end{defn}

\begin{rem}
\begin{enumerate}
\item A nice exercise: each identity endomorphism $1_x$ is homogeneous of degree zero satisfying $d_\mathcal{A}(1_x)=0$.
    \item The complex $\mathcal{A}(x, y)$ is sometimes denoted by ${\rm Hom}_\mathcal{A}(x, y)$, the Hom-complex. \end{enumerate}
\end{rem}

In a dg category $\mathcal{A}$, a homogeneous morphism $f$ is \emph{closed} provided that $d_\mathcal{A}(f)=0$; a \emph{dg isomorphism} means a closed isomorphism of degree zero.

  We will later omit the word ``homogeneous" by the following convention.

$${\boxed{\bf \mbox{In the dg setting, we only consider homogeneous morphisms and elements!}}}$$

\vskip 10pt

\begin{defn}
For a dg category $\mathcal{A}$, its \emph{ordinary category} $Z^0(\mathcal{A})$ is defined as follows: the same objects with $\mathcal{A}$, and morphisms are given by closed morphisms of degree zero.

 Similarly, the \emph{homotopy category} $H^0(\mathcal{A})$ has the same objects whose morphism spaces are given by the zeroth cohomologies of the Hom-complexes in $\mathcal{A}$.
\end{defn}

\begin{rem}
\begin{enumerate}
\item $H^0(\mathcal{A})$ is a factor category of $Z^0(\mathcal{A})$.
\item The graded version: $Z(\mathcal{A})$ and $H(\mathcal{A})$.
\end{enumerate}
\end{rem}

\begin{defn}
For two dg categories $\mathcal{A}$ and $\mathcal{B}$, a \emph{dg functor} $F\colon \mathcal{A}\rightarrow \mathcal{B}$ is a $k$-linear functor such that
$$F_{x, y}\colon \mathcal{A}(x, y)\longrightarrow \mathcal{B}(Fx, Fy)$$
is a chain map. In other words, $|F(f)|=|f|$ and $F(d_\mathcal{A}(f))=d_\mathcal{B}(Ff)$.
\end{defn}

We might define the notion of a \emph{dg equivalence},  and prove the following theorem: a dg functor is a dg equivalence if and only if it is fully faithful and dg dense (i.e. every object in the target category is dg isomorphic to some object in the image). However,  the more useful notions are as follows.

\begin{defn}
A dg functor $F\colon \mathcal{A}\rightarrow \mathcal{B}$ is \emph{quasi-fully faithful}, if each $F_{x, y}$ is a quasi-isomorphism. If in addition $H^0(F)\colon H^0(\mathcal{A})\rightarrow H^0(\mathcal{B})$ is dense, $F$ is called a \emph{quasi-equivalence}.
\end{defn}

\begin{exm}\label{exm:dg}
\begin{enumerate}
\item A dg algebra = a dg category with a single object. Then dg functors between these dg categories are just dg algebra homomorphisms.

    Accordingly, for each object $x\in \mathcal{A}$, $\mathcal{A}(x, x)$ is a dg algebra, the \emph{dg endomorphism algebra} of $x$.
\item For each $n\in \mathbb{Z}$, the $n$-th \emph{disc} $\mathcal{D}(n)$ is defined as follows: the object set is $\{x, y\}$ such that $\mathcal{D}(n)(x, x)=k1_x$, $\mathcal{D}(n)(y, y)=k1_y$, $\mathcal{D}(n)(y, x)=0$ and $\mathcal{D}(n)(x, y)=k\delta\oplus k\epsilon$, with $|\delta|=-n$ and $|\epsilon|=-n+1$. The differential is determined by $d(\delta)=\epsilon$.

    The $(n-1)$-th \emph{sphere} dg category $\mathcal{S}(n-1)$ is the non-full subcategory of $\mathcal{D}(n)$ with the same object set and spanned by $\{1_x, 1_y, \epsilon\}$.

\item The \emph{opposite} dg category $\mathcal{A}^{\rm op}$: $a\circ^{\rm op} b=(-1)^{|a|\cdot |b|} b\circ a$.

\item The dg category $C_{\rm dg}(k)$ of $k$-modules: the objects are complexes $V=(\bigoplus_{n\in \mathbb{Z}} V^n, d_V)$ of $k$-modules; the $p$-th component of the Hom-complex ${\rm Hom}(V, W)$ is given by
    \begin{align*}
    {\rm Hom}(V, W)^p &=\prod_{n\in \mathbb{Z}} {\rm Hom}_k(V^n, W^{n+p})\\
    &=\{f\colon V\rightarrow W \; |\; f \mbox{ is homogeneous of degree } p\}.\end{align*}
    The differential is given by
    $$d(f)=d_W\circ f-(-1)^{|f|} f\circ d_V.$$
\end{enumerate}
\end{exm}

\begin{rem}
\begin{enumerate}
\item In Example \ref{exm:dg} (4),  the dg algebra ${\rm End}(V)={\rm Hom}(V, V)$ is the well-known dg endomorphism algebra of the complex $V$!

\item Fact: $Z^0(C_{\rm dg}(k))$ coincides with the abelian category $\mathbf{C}(k\mbox{-Mod})$ of cochain complexes, and $H^0(C_{\rm dg}(k))$ is the classical homotopy category $\mathbf{K}(k\mbox{-Mod})$!
    \end{enumerate}
\end{rem}

\subsection{Tensor products and functor categories}

The \emph{tensor product} $\mathcal{A}\otimes \mathcal{B}$ of two dg categories: the object class is ${\rm obj}(\mathcal{A})\times {\rm obj}(\mathcal{B})$, whose element $(x, y)$ is denoted by $x\otimes y$,   and the Hom-complex is
$$\mathcal{A}\otimes \mathcal{B}(x\otimes y, x'\otimes y')=\mathcal{A}(x, x')\otimes \mathcal{B}(y, y').$$

The Koszul sign rule:
$$(a\otimes b) \circ (a'\otimes b')=(-1)^{|b|\cdot |a'|} a\circ a'\otimes b \circ b' $$
and
$$d(a\otimes b)=d_\mathcal{A}(a)\otimes b + (-1)^{|a|} a\otimes d_\mathcal{B}(b).$$

\begin{rem}
The commutativity holds:
$$(a\otimes 1_{y'}) \circ (1_x\otimes b)=a\otimes b= (-1)^{|a|\cdot |b|} (1_{x'} \otimes b) \circ (a\otimes 1_{y})$$
for any $a\colon x\rightarrow x'$ in $\mathcal{A}$ and $b\colon y\rightarrow y'$ in $\mathcal{B}$.
\end{rem}

The \emph{functor dg category} $\mathcal{H}om(\mathcal{A}, \mathcal{B})$: the objects are dg functors from $\mathcal{A}$ to $\mathcal{B}$; a morphism $\eta\colon F\rightarrow G$ of degree $p$ between two such functors is given by a family  $\eta=(\eta_x)_{x\in {\rm obj}(\mathcal{A})}$ of morphisms $\eta_x \colon F(x)\rightarrow G(x)$ of degree $p$ in $\mathcal{B}$ satisfying
$$G(a)\circ \eta_x =(-1)^{|\eta|\cdot |a|} \eta_{x'}\circ F(a)$$
for all $a\colon x\rightarrow x'$.  These morphisms form  the $p$-th component of the Hom-complex ${\rm Hom}(F, G)$. The differential is given by
$$(d\eta)_x=d_\mathcal{B}(\eta_x).$$

\begin{lem}
For any dg categories $\mathcal{A}, \mathcal{B}$ and $\mathcal{C}$, there is a canonical isomorphism of dg categories
$$\mathcal{H}om(\mathcal{A}\otimes \mathcal{B}, \mathcal{C})\longrightarrow \mathcal{H}om(\mathcal{A}, \mathcal{H}om(\mathcal{B}, \mathcal{C})), \quad F\mapsto (x\mapsto F(x\otimes -)).$$
\end{lem}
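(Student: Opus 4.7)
The plan is to describe the map $F\mapsto (x\mapsto F(x\otimes-))$ explicitly as a dg functor $\Phi$, and then exhibit a two-sided inverse by reverse currying. All the real work concentrates in three places: defining $\Phi$ on morphisms, verifying naturality at two nested levels, and constructing the inverse on morphisms of the form $a\otimes b$.

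First I will define $\Phi$ on objects. Given $F\colon \A\otimes\B\to\C$, set $\tilde F(x)\colon \B\to\C$ to be the restriction $y\mapsto F(x\otimes y)$, $b\mapsto F(1_x\otimes b)$, which is manifestly a dg functor. For $a\colon x\to x'$ in $\A$, define $\tilde F(a)\colon \tilde F(x)\to \tilde F(x')$ in $\Hom(\B,\C)$ by the family $\tilde F(a)_y := F(a\otimes 1_y)$. The required naturality
$$\tilde F(x')(b)\circ \tilde F(a)_y \;=\; (-1)^{|a|\cdot|b|}\, \tilde F(a)_{y'}\circ \tilde F(x)(b)$$
follows from applying $F$ to the Koszul identity $(1_{x'}\otimes b)\circ (a\otimes 1_y) = (-1)^{|a||b|} (a\otimes 1_{y'})\circ(1_x\otimes b)$ recalled in the remark above. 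That $\tilde F$ is then a dg functor $\A\to\Hom(\B,\C)$ reduces to the fact that $F$ itself respects composition and the differential.

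Next I extend $\Phi$ to morphisms. For $\eta\colon F\to G$ of degree $p$, I put $(\Phi(\eta)_x)_y := \eta_{x\otimes y}$. The naturality of $\eta$ applied to morphisms of type $1_x\otimes b$ gives that each $\Phi(\eta)_x$ is a morphism in $\Hom(\B,\C)$; applied to morphisms $a\otimes 1_y$ it gives that $\Phi(\eta)$ is itself a morphism of dg functors $\A\to\Hom(\B,\C)$. Preservation of identities, composition, and the differential of the Hom-complex is then componentwise immediate, so $\Phi$ is a dg functor between the relevant functor dg categories.

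Finally I construct the inverse $\Psi$. Given $G\colon \A\to\Hom(\B,\C)$, set $\hat G(x\otimes y):=G(x)(y)$ and, using the factorization $a\otimes b=(a\otimes 1_{y'})\circ(1_x\otimes b)$, put $\hat G(a\otimes b):=G(a)_{y'}\circ G(x)(b)$. On morphisms, set $\Psi(\alpha)_{x\otimes y}:=(\alpha_x)_y$. Verifying that $\hat G$ preserves composition and the differential, and that $\Psi\circ\Phi$ and $\Phi\circ\Psi$ are the identity, is then an unwinding of definitions. The main obstacle throughout will be the consistent bookkeeping of Koszul signs: in particular, showing that $\hat G(a\otimes b)$ is unambiguous (the alternative factorization through $x'\otimes y$ produces the same morphism up to the expected sign) uses exactly the naturality of $G(a)$ against $b$ combined with the sign rule of the tensor product. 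No single verification is deep, but each must be carried out with the sign conventions firmly in hand.
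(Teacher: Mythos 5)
Your proposal is correct, and since the paper states this lemma without any proof, your explicit currying argument --- $\tilde F(x)(b)=F(1_x\otimes b)$, $\tilde F(a)_y=F(a\otimes 1_y)$, $(\Phi(\eta)_x)_y=\eta_{x\otimes y}$, with inverse $\hat G(a\otimes b)=G(a)_{y'}\circ G(x)(b)$ --- is exactly the standard verification the paper leaves to the reader; in particular the sign checks you flag do go through under the paper's conventions, e.g.\ the remark's identity $(a\otimes 1_{y'})\circ(1_x\otimes b)=a\otimes b=(-1)^{|a|\cdot|b|}(1_{x'}\otimes b)\circ(a\otimes 1_y)$ gives both the naturality of $\tilde F(a)$ and the compatibility of $\hat G$ with the Koszul sign in composition. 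Nothing essential is missing: since every morphism in $\mathcal{A}\otimes\mathcal{B}$ is a $k$-linear sum of pure tensors and all constructions are $k$-linear, checking functoriality and the Leibniz rule on pure tensors, as you do, suffices.
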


\begin{exm}
Let $A$ and $B$ be two dg algebras, viewed as dg categories. Then $A\otimes B$ is just the tensor product of dg algebras. The objects in the dg categories $\mathcal{H}{\emph om}(A, B)$ are just dg algebra homomorphisms from $A$ to $B$. For two homomorphisms $\theta, \theta'\colon A\rightarrow B$ and $p\in \mathbb{Z}$, we have
$${\rm Hom}(\theta, \theta')^p=\{b\in B^p\; |\; \theta'(a) b=(-1)^{|a|\cdot |b|}b\theta(a) \mbox{ for all } a\in A\}.$$
In particular, ${\rm Hom}(\theta, \theta')$ is a subcomplex of $B$.
\end{exm}

The \emph{morphism dg  category} ${\emph mor}(\mathcal{A})$ of $\mathcal{A}$ is introduced in \cite[Subsection 2.9]{Dri} : the objects are triples $(x, y; a)$, with $x, y\in {\rm obj}(\mathcal{A})$ and $a\colon x\rightarrow y$ a closed morphism of degree zero; the $p$-th component of the Hom-complex is given by
$${\rm Hom}((x, y; a), (x', y'; a'))^p=\{\begin{pmatrix} \alpha & 0\\
                                                         h & \beta\end{pmatrix}\; |\; \alpha\in \mathcal{A}(x, x')^p,  \beta\in \mathcal{A}(y, y')^p, h\in \mathcal{A}(x, y')^{p-1}\},$$
with the differential given by
$$d \begin{pmatrix} \alpha & 0\\
h & \beta\end{pmatrix}= \begin{pmatrix} -d_\mathcal{A}(\alpha) & 0\\
 d_\mathcal{A}(h)+a'\circ \alpha-(-1)^p\beta \circ a & d_\mathcal{A}(\beta)\end{pmatrix}.$$                                                       The composition is given by the matrix multiplication.

Denote by ${\rm mor}(H^0(\mathcal{A}))$ the usual morphism category of the homotopy category $H^0(\mathcal{A})$ of $\mathcal{A}$.

\begin{lem}
The canonical functor
$$H^0({mor}(\mathcal{A})) \longrightarrow {\rm mor}(H^0(\mathcal{A})), \quad (x, y; a)\mapsto (x, y; \bar{a})$$
is full and dense, and its kernel ideal is square zero.
\end{lem}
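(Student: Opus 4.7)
My plan is to verify the three assertions separately. Density is immediate: an object of $\operatorname{mor}(H^0(\mathcal{A}))$ is a triple $(x,y;\bar a)$ where $\bar a \in H^0\mathcal{A}(x,y)$, so one simply lifts $\bar a$ to any closed degree-zero representative $a\in \mathcal{A}(x,y)^0$ and observes that the object $(x,y;a)$ of $\operatorname{mor}(\mathcal{A})$ maps to it.

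For fullness, I would take a morphism $(\bar\alpha,\bar\beta)\colon (x,y;\bar a)\to (x',y';\bar{a'})$ in $\operatorname{mor}(H^0(\mathcal{A}))$ and pick closed degree-zero representatives $\alpha\in\mathcal{A}(x,x')^0$ and $\beta\in\mathcal{A}(y,y')^0$. The commutation $\bar{a'}\bar\alpha=\bar\beta\bar a$ in $H^0(\mathcal{A})$ says precisely that $\beta a-a'\alpha\in\mathcal{A}(x,y')^0$ is a coboundary, so one can choose $h\in\mathcal{A}(x,y')^{-1}$ with $d(h)=\beta a-a'\alpha$. With this choice, inspection of the differential formula stated in the excerpt (with $p=0$) shows that the matrix $\bigl(\begin{smallmatrix}\alpha&0\\ h&\beta\end{smallmatrix}\bigr)$ is closed of degree zero in $\operatorname{mor}(\mathcal{A})$, and its class in $H^0(\operatorname{mor}(\mathcal{A}))$ visibly lifts $(\bar\alpha,\bar\beta)$.

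For the kernel ideal being square zero, I would first reduce every kernel element to a normal form. If $M=\bigl(\begin{smallmatrix}\alpha&0\\ h&\beta\end{smallmatrix}\bigr)$ represents a morphism in the kernel, then $\bar\alpha=0$ and $\bar\beta=0$, so there exist $u,v$ of degree $-1$ with $\alpha=d(u)$ and $\beta=d(v)$. Applying the differential to the degree-$(-1)$ matrix $\bigl(\begin{smallmatrix}-u&0\\ 0&v\end{smallmatrix}\bigr)$ and using the given formula with $p=-1$, one finds that $M$ is homotopic to a matrix of the shape $\bigl(\begin{smallmatrix}0&0\\ h'&0\end{smallmatrix}\bigr)$, and closedness forces $h'$ to be a cocycle of degree $-1$. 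Given any two kernel classes, the matrix product of such normalized representatives is
\[
\begin{pmatrix}0&0\\ h'_2&0\end{pmatrix}\begin{pmatrix}0&0\\ h'_1&0\end{pmatrix}=\begin{pmatrix}0&0\\ 0&0\end{pmatrix},
\]
so the composite vanishes already in $\operatorname{mor}(\mathcal{A})$, hence a fortiori in $H^0(\operatorname{mor}(\mathcal{A}))$.

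The one point where I expect to spend care is the sign bookkeeping in the normal-form reduction, because the differential on $\operatorname{mor}(\mathcal{A})$ carries a $(-1)^p$ factor and the homotopy is performed with $p=-1$. Once the normal form is in hand, the square-zero conclusion is purely formal: strictly lower-triangular $2\times 2$ matrices multiply to zero.
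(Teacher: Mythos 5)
Your proof is correct, and for density and fullness it follows the same route as the paper: the paper compresses fullness into the observation that a commutative square in $H^0(\mathcal{A})$ lifts to a square in $Z^0(\mathcal{A})$ commuting up to homotopy, which is exactly your choice of $h$ with $d(h)=\beta\circ a-a'\circ\alpha$ making the matrix closed (density is left implicit there, as you note it is immediate). For the square-zero assertion, however, you take a genuinely different route. The paper composes two kernel representatives in the general form $\left(\begin{smallmatrix} d(u) & 0\\ h & d(v)\end{smallmatrix}\right)$ and exhibits the product in one shot as the coboundary of $\left(\begin{smallmatrix} -d(u')\circ u & 0\\ \tilde{h} & v'\circ d(v)\end{smallmatrix}\right)$ with $\tilde{h}=v'\circ h-h'\circ u+v'\circ a'\circ u$; the cost is having to guess this primitive and verify the signs. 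You instead normalize first: with $p=-1$ in the differential formula one gets $d\left(\begin{smallmatrix} -u & 0\\ 0 & v\end{smallmatrix}\right)=\left(\begin{smallmatrix} d(u) & 0\\ v\circ a-a'\circ u & d(v)\end{smallmatrix}\right)$, so your normalized representative is $\left(\begin{smallmatrix} 0 & 0\\ h' & 0\end{smallmatrix}\right)$ with $h'=h-v\circ a+a'\circ u$, and closedness of the original matrix, i.e. $d(h)=d(v)\circ a-a'\circ d(u)$, does force $d(h')=0$ as you claim; after that the vanishing of the composite is structural, since strictly lower-triangular matrices multiply to zero already in $Z^0({\rm mor}(\mathcal{A}))$. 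Your version confines all sign bookkeeping to a single homotopy and yields the sharper fact that every kernel class is represented by a degree-$(-1)$ cocycle $h'\in\mathcal{A}(x,y')$ in the corner, which makes the structure of the ideal visible; what it uses that the paper's computation does not is that composition in $H^0({\rm mor}(\mathcal{A}))$ is well defined on homotopy classes (the Leibniz rule for the matrix differential) --- harmless, since ${\rm mor}(\mathcal{A})$ is a dg category, but worth noting that the paper's explicit primitive sidesteps even this. Indeed, unwinding your two homotopies $H_1, H_2$ produces the primitive $M_2\circ H_1+H_2\circ M_1-H_2\circ d(H_1)$ for the composite, which differs from the paper's matrix only by a cycle, so the two computations are consistent.
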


\begin{proof}
For the fullness, just notice that a commutative square in $H^0(\mathcal{A})$ is given by a square in $Z^0(\mathcal{A})$,  which is commutative up to homotopy.

Take two morphisms in the kernel ideal: $\begin{pmatrix} d_\mathcal{A}(u) & 0\\
h & d_\mathcal{A}(v)\end{pmatrix}\colon(x, y; a) \rightarrow (x', y'; a')$ and $\begin{pmatrix} d_\mathcal{A}(u') & 0\\
h' & d_\mathcal{A}(v')\end{pmatrix}\colon(x', y'; a') \rightarrow (x'', y''; a'')$. Using $d_\mathcal{A}(h)=d_\mathcal{A}(v)\circ a-a'\circ d_\mathcal{A}(u)$ and $d_\mathcal{A}(h')=d_\mathcal{A}(v')\circ a'-a''\circ d_\mathcal{A}(u')$, we have
$$ \begin{pmatrix} d_\mathcal{A}(u') & 0\\
h' & d_\mathcal{A}(v')\end{pmatrix} \circ \begin{pmatrix} d_\mathcal{A}(u) & 0\\
h & d_\mathcal{A}(v)\end{pmatrix}=d\begin{pmatrix} -d_\mathcal{A}(u')\circ u & 0\\
                                        \tilde{h} & v'\circ d_\mathcal{A}(v)\end{pmatrix},$$
                                        where $\tilde{h}=v'\circ h-h'\circ u+ v'\circ a'\circ u$.
\end{proof}

We mention that the morphism dg category might be viewed as a certain gluing  \cite[Section 4]{KL} of dg categories.

\section{The category of dg modules}

In this section, the dg category $\mathcal{A}$ is required to be  small.

\subsection{DG modules}

\begin{defn}
A \emph{left dg $\mathcal{A}$-module} is a dg functor $M\colon \mathcal{A}\rightarrow C_{\rm dg}(k)$. The dg category of left dg $\mathcal{A}$-modules is given by $\mathcal{A}\mbox{-}{\rm DGMod}=\mathcal{H}{om}(\mathcal{A}, C_{\rm dg}(k)).$ The Hom-complex between two dg modules $M$ and $M'$ is denoted by ${\rm Hom}_\mathcal{A}(M, M')$.

By definition, a \emph{right dg $\mathcal{A}$-module} is a left dg $\mathcal{A}^{\rm op}$-module. Then we have the dg category ${\rm DGMod}\mbox{-}\mathcal{A}$ of right dg $\mathcal{A}$-modules.
\end{defn}

By default, modules are left modules. Later, we will justify the following viewpoint.

$${\boxed{\bf \mbox{dg modules over dg categories are better viewed as modules than functors!}}}$$

\begin{rem}
A left dg $\mathcal{A}$-module  $M\colon \mathcal{A}\rightarrow C_{\rm dg}(k)$ is identified with  the formal sum $\bigoplus_{x\in {\rm obj}(\mathcal{A})} M(x)$ of complexes. Here, by a \emph{formal sum}, we really mean a complex graded by ${\rm obj}(\mathcal{A})$ rather than a coproduct of complexes.  The \emph{left $\mathcal{A}$-action} on $M$ is defined as follows: for $a\colon x\rightarrow y$ and $m\in M(x)$,
$$a.m=M(a)(m)\in M(y).$$
Recall that $M(d_\mathcal{A}(a))=d(M(a)),$ where $d$ denotes the differential in $C_{\rm dg}(k)$. It is a nice exercise to prove that it is equivalent to the Leibniz rule
$$d_M(a.m)=d_\mathcal{A}(a).m+(-1)^{|a|} a. d_M(m).$$
We use the notation $_\mathcal{A} M$ for the left dg module.

Similarly, a right dg $\mathcal{A}$-module $N\colon \mathcal{A}^{\rm op}\rightarrow C_{\rm dg}(k)$ is identified with the formal sum $\bigoplus_{x\in {\rm obj}(\mathcal{A})} N(x)$. We define the \emph{right $\mathcal{A}$-action} on $N$ by $$n.a=(-1)^{|n|\cdot |a|} N(a)(n)\in N(x)$$
for $n\in N(y)$. The Leibniz rule for $N$ has the following form
$$d_N(n.a)=d_N(n).a +(-1)^{|n|} n.d_\mathcal{A}(a).$$
We use the notation $N_\mathcal{A}$ for the right dg module.
\end{rem}

\begin{exm}
For each object $y$, the  free dg  $\mathcal{A}$-module $\mathcal{A}(y, -)$ is identified with $\bigoplus_{x\in {\rm obj}(\mathcal{A})} \mathcal{A}(y, x)$, whose left $\mathcal{A}$-action is given by the composition of morphisms. We observe that in general, $\mathcal{A}(y, -)$ is not a projective object in the abelian category $Z^0(\mathcal{A}\mbox{-}{\rm DGMod})$.

For each $_\mathcal{A}M$, we have a canonical isomorphism of complexes
$${\rm Hom}_\mathcal{A}(\mathcal{A}(y, -), M) \longrightarrow M(y),\quad  f\mapsto f_y(1_y)\in M(y). $$
Consequently, the Yoneda dg functor
$$\mathcal{A}^{\rm op}\longrightarrow \mathcal{A}\mbox{-}{\rm DGMod}, \quad y\mapsto \mathcal{A}(y, -).$$
is fully faithful. A nice exercise: the Yoneda functor is a dg functor!
\end{exm}

\subsection{The suspension and cones}

As we will see, the dg category $\mathcal{A}\mbox{-}{\rm DGMod}$ has rich structures: the suspension dg functor and functorial cones.

\vskip 5pt

For a complex $X$, we denote by $\Sigma(X)$ the suspended complex: $\Sigma(X)^p=X^{p+1}$ and $d_{\Sigma(X)}=-d_X$. For each element $x\in X^{p+1}$, the corresponding element in $\Sigma(X)^p$ will be denoted by $\Sigma(x)$, meaning that $|\Sigma(x)|=|x|-1$.
\vskip 5pt

WARNING: $\Sigma(x)$ is just  a symbol to remember the change of degrees, and is not induced by any action on $X$!

\vskip 5pt

For $_\mathcal{A} M$, we have the suspended dg module $\Sigma(M)$ defined as follows: as complexes, $\Sigma(M)(x)=\Sigma(M(x))$ for each object $x$, and the new $\mathcal{A}$-action is given by
 $$a.\Sigma(m)=(-1)^{|a|} \Sigma(a.m)$$
  for $m\in M(x)$ and $a\in \mathcal{A}(x, y)$. For each morphism $f\colon M\rightarrow M'$ between dg modules, we set $\Sigma(f)\colon \Sigma(M)\rightarrow \Sigma(N)$ by $\Sigma(f)_x=(-1)^{|f|} f_x$.  Then we have the dg endofunctor
  $$\Sigma \colon \mathcal{A}\mbox{-}{\rm DGMod}\longrightarrow \mathcal{A}\mbox{-}{\rm DGMod},$$
  called the \emph{suspension dg functor}.

  For a closed morphism $f\colon M\rightarrow M'$ of degree zero, its \emph{mapping cone} ${\rm Cone}(f)$ is the dg $\mathcal{A}$-module defined as follows: as a graded $\mathcal{A}$-module, we have ${\rm Cone}(f)=M'\oplus \Sigma(M)$; the differential is given such that ${\rm Cone}(f)(x)={\rm Cone}(f_x)$. In other words,
  $$d_{{\rm Cone}(f)}=\begin{pmatrix} d_{M'} & f\\
                                    0 & d_{\Sigma(M)}\end{pmatrix}.$$
                                    Here, we implicitly use the identification $\Sigma(M)$ with $M$, and  obtain $f\colon \Sigma(M)=M\rightarrow M'$. The identification $\Sigma(M)\rightarrow M$ is actually a closed isomorphism of degree one. This will be clearer in the general consideration later.

                                    The following diagram
                                    \begin{align}\label{equ:cone}
                                    \xymatrix{ M'\ar[rr]^-{\binom{1}{0}} && {\rm Cone}(f) \ar@/^1pc/@{.>}[ll]^-{(1,0)} \ar[rr]^-{(0, 1)} &&  \Sigma(M) \ar@/^1pc/@{.>}[ll]^-{\binom{0}{1}} }
                                    \end{align}
                                    is a biproduct in the category of graded $\mathcal{A}$-modules. The solid arrows are closed morphisms, and the dotted arrows in general are not closed.

\subsection{DG bimodules}

\begin{defn}
A \emph{dg $\mathcal{A}$-$\mathcal{B}$-bimodule} is a dg functor $X\colon \mathcal{A}\otimes \mathcal{B}^{\rm op}\rightarrow C_{\rm dg}(k)$. The complex $X(u\otimes y)$ is usually denoted by $X(y, u)$ for $u\in {\rm obj}(\mathcal{A})$ and $y\in {\rm obj}(\mathcal{B})$. Consequently, $X(y, -)$ is a left dg $\mathcal{A}$-module and $X(-, u)$ is a right dg $\mathcal{B}$-module.
\end{defn}

\begin{rem}
The dg bimodule $_\mathcal{A}X_\mathcal{B}$ might be viewed as a bunch of left dg $\mathcal{A}$-modules indexed by ${\rm obj}(\mathcal{B})$, and in the same time a bunch of right dg $\mathcal{B}$-modules indexed by ${\rm obj}(\mathcal{A})$. Strictly speaking, the notations $_\mathcal{A}X$ and $X_\mathcal{B}$ are not well-defined.
\end{rem}

\begin{exm}
For a dg functor $F\colon \mathcal{A}\rightarrow \mathcal{B}$, we have the dg $\mathcal{A}$-$\mathcal{B}$-bimodule $_F\mathcal{B}_1$ given by: ${_F\mathcal{B}_1}(y, u)=\mathcal{B}(y, F(u))$. Similarly, we have the dg $\mathcal{B}$-$\mathcal{A}$-bimodule $_1\mathcal{B}_F$.
\end{exm}

For a dg $\mathcal{A}$-$\mathcal{B}$-bimodule $X$, we have the Hom  dg functor
$${\rm Hom}_\mathcal{A}(X, -)\colon \mathcal{A}\mbox{-}{\rm DGMod} \longrightarrow \mathcal{B}\mbox{-}{\rm DGMod}.$$
The left dg $\mathcal{B}$-module ${\rm Hom}_\mathcal{A}(X, M)$ is described as follows: for $y\in {\rm obj}(\mathcal{B})$, we have ${\rm Hom}_\mathcal{A}(X, M)(y)={\rm Hom}_\mathcal{A}(X(y, -), M)$, so, it is identified with the formal sum $\bigoplus_{y\in {\rm obj}(\mathcal{B})} {\rm Hom}_\mathcal{A}(X(y, -), M)$;  the left $\mathcal{B}$-action on ${\rm Hom}_\mathcal{A}(X, M)$ is induced by the right $\mathcal{B}$-action on $X$,  that is, for a morphism $b\colon y\rightarrow y'$ in $\mathcal{B}$, $f\colon X(y, -)\rightarrow M$ and $x\in X(y', -)$, we have
$$(b.f)(x)=(-1)^{|b|\cdot (|f|+|x|)}f(x.b).$$

 The tensor dg functor is given by
$$X\otimes_\mathcal{B}-\colon \mathcal{B}\mbox{-}{\rm DGMod} \longrightarrow \mathcal{A}\mbox{-}{\rm DGMod}.$$
Here, the tensor product $X\otimes_\mathcal{B} N$ is identified with $\bigoplus_{u\in {\rm obj}(\mathcal{A})} X(-, u)\otimes_\mathcal{B} N$. The complex  $ X(-, u)\otimes_\mathcal{B} N$ is defined by
$$(\oplus_{y\in {\rm obj}(\mathcal{B})}X(y, u)\otimes N(y))/{\langle x.b\otimes n-x\otimes b.n \rangle}.$$
We observe  a dg isomorphism
$$X\otimes_\mathcal{B} \mathcal{B}(y, -)\simeq X(y, -)$$
 of left $\mathcal{A}$-modules.

\begin{lem}
There is a canonical isomorphism of complexes
$${\rm Hom}_\mathcal{A}(X\otimes_\mathcal{B} N, M)\longrightarrow {\rm Hom}_\mathcal{B}(N, {\rm Hom}_\mathcal{A}(X, M)).$$
\end{lem}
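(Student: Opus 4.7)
The plan is to produce the classical tensor-hom adjunction in the dg setting. Given a closed or more generally homogeneous $f\colon X\otimes_\mathcal{B}N\to M$ of degree $p$, I will send it to the map $\tilde f\colon N\to {\rm Hom}_\mathcal{A}(X,M)$ defined, for $n\in N(y)$ and $x\in X(y,-)$, by
$$\tilde f(n)(x)=(-1)^{|n|\cdot|x|}\,f(x\otimes n),$$
with the Koszul sign dictated by passing $n$ through $x$. The inverse sends $g\colon N\to {\rm Hom}_\mathcal{A}(X,M)$ to the map $x\otimes n\mapsto (-1)^{|n|\cdot|x|}g(n)(x)$, defined first on the direct sum $\bigoplus_y X(y,u)\otimes N(y)$ and then shown to descend to the quotient by the relation $x.b\otimes n-x\otimes b.n$.

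First I would verify that $\tilde f(n)\colon X(y,-)\to M$ really is a morphism of left dg $\mathcal{A}$-modules, i.e.\ is $\mathcal{A}$-linear with the correct sign convention, using the left $\mathcal{A}$-linearity of $f$ together with the formula for the $\mathcal{A}$-action on ${\rm Hom}_\mathcal{A}(X,M)$ recalled just before the lemma. Next I would check that $\tilde f$ itself is $\mathcal{B}$-linear, which reduces, after unwinding the definition of the left $\mathcal{B}$-action on ${\rm Hom}_\mathcal{A}(X,M)$, precisely to the balancing relation $x.b\otimes n=x\otimes b.n$ used to define $X\otimes_\mathcal{B}N$; this is where one sees why the quotient in the definition of the tensor product is exactly what is needed.

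Then I would check compatibility with differentials, i.e.\ that $f\mapsto \tilde f$ is a chain map of degree zero. Expanding $d({\rm Hom}_\mathcal{B}(N,{\rm Hom}_\mathcal{A}(X,M)))$ and $d({\rm Hom}_\mathcal{A}(X\otimes_\mathcal{B}N,M))$ via the formula from $C_{\rm dg}(k)$ and the Leibniz rule $d(x\otimes n)=d_X(x)\otimes n+(-1)^{|x|}x\otimes d_N(n)$, the equality $\widetilde{d(f)}=d(\tilde f)$ becomes a bookkeeping check of signs. Finally I would note that the two assignments are mutually inverse by direct computation on elements, so the resulting map of complexes is a dg isomorphism.

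The only real obstacle is the sign accounting: one must be careful that the Koszul sign $(-1)^{|n|\cdot|x|}$ is correct for $\tilde f(n)(x)$ and that it interacts properly with both the left $\mathcal{B}$-action on ${\rm Hom}_\mathcal{A}(X,M)$ (which already carries its own sign $(-1)^{|b|\cdot(|f|+|x|)}$) and with the differential. Apart from this, every step is formal, and the proof is conceptually the dg enhancement of the classical tensor-hom adjunction over rings with several objects.
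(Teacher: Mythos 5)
Your proposal is correct: the paper states this lemma without proof, and your argument is precisely the standard tensor-hom adjunction check it leaves implicit. In particular your Koszul sign is the right one --- with $\tilde f(n)(x)=(-1)^{|n|\cdot|x|}f(x\otimes n)$, the $\mathcal{B}$-linearity of $\tilde f$ (with the paper's sign $(-1)^{|b|\cdot(|f|+|x|)}$ in the left $\mathcal{B}$-action on ${\rm Hom}_\mathcal{A}(X,M)$) reduces exactly to the balancing relation $x.b\otimes n=x\otimes b.n$, and the identity $\widetilde{d(f)}=d(\tilde f)$ holds on the nose, so the two assignments are mutually inverse isomorphisms of complexes.
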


\section{The derived category}
The standard reference for the derived category of a dg category is \cite{Ke94}.

\subsection{The definitions and resolutions}

The ordinary category $Z^0(\mathcal{A}\mbox{-}{\rm DGMod})$ is abelian. Here, we observe that a closed morphism of degree zero $f\colon M\rightarrow M'$ necessarily  respects the grading, differentials and $\mathcal{A}$-actions. An exact sequence
$$0\longrightarrow X \stackrel{f}\longrightarrow Y \stackrel{g}\longrightarrow Z\longrightarrow 0$$
in $Z^0(\mathcal{A}\mbox{-}{\rm DGMod})$ is \emph{graded-split}, provided that there exists $t\in {\rm Hom}_\mathcal{A}(Z, Y)^0$ satisfying $g\circ t=1_Z$, or equivalently, there exists $s\in {\rm Hom}_\mathcal{A}(Y, X)^0$ satisfying $s\circ f=1_X$.

\begin{prop}
The category $Z^0(\mathcal{A}\mbox{-}{\rm DGMod})$ is a Frobenius exact category, whose conflations are given by graded-split short exact sequences. The resulting stable category coincides with $H^0(\mathcal{A}\mbox{-}{\rm DGMod})$.
\end{prop}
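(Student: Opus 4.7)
The plan is to verify four items in sequence: (1) the graded-split short exact sequences define a Quillen exact structure on $Z^0(\mathcal{A}\mbox{-}{\rm DGMod})$; (2) there are enough projectives and enough injectives; (3) projective-injective objects are exactly the contractible dg modules, yielding the Frobenius property; and (4) the stable category coincides with $H^0(\mathcal{A}\mbox{-}{\rm DGMod})$. For (1), I would start from the observation that every graded-split conflation is isomorphic, in the category of graded $\mathcal{A}$-modules, to a biproduct diagram $0\to X\to X\oplus Z\to Z\to 0$ whose middle differential takes the upper-triangular form $\left(\begin{smallmatrix} d_X & h\\ 0 & d_Z\end{smallmatrix}\right)$ for some degree-one graded map $h\colon Z\to X$ satisfying $d_X h+hd_Z=0$ (forced by $d^2=0$). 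From this biproduct description, Quillen's axioms descend from the corresponding split axioms for graded $\mathcal{A}$-modules and are essentially routine.

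For (2) and (3), the central construction will be ${\rm Cone}(1_M)$. By the diagram (\ref{equ:cone}), the closed mono $\binom{1}{0}\colon M\to{\rm Cone}(1_M)$ and the closed epi $(0,\,1)\colon{\rm Cone}(1_M)\to\Sigma M$ assemble into a graded-split conflation, and ${\rm Cone}(1_M)$ is contractible: the degree $-1$ self-map given in matrix form by $\left(\begin{smallmatrix} 0 & 0\\ 1 & 0\end{smallmatrix}\right)$ is a contracting homotopy. Replacing $M$ by $\Sigma^{-1}M$ will dually furnish a graded-split admissible epimorphism from a contractible dg module onto $M$. To see that any contractible $C$ (with $1_C=d(s)$) is injective in the exact structure, I would take an admissible mono in the biproduct form $X\hookrightarrow X\oplus Z$ and a closed $f\colon X\to C$ of degree zero, and seek a closed extension $g=(f,g_2)\colon X\oplus Z\to C$. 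Closedness reduces to the equation $d(g_2)=fh$; the right-hand side is itself closed as a degree-one map into $C$, thanks to $d_Xh+hd_Z=0$, and the contraction then produces the explicit solution $g_2=s\circ fh$, verified by $d(s\circ fh)=d(s)\circ fh-s\circ d(fh)=1_C\circ fh=fh$. Projectivity is dual. Conversely, any projective-injective splits off from its own embedding into ${\rm Cone}(1_{\bullet})$ and is therefore contractible, giving projective $=$ injective $=$ contractible and the Frobenius property.

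For (4), the stable category is the quotient of $Z^0(\mathcal{A}\mbox{-}{\rm DGMod})$ by the ideal of morphisms factoring through projective-injective objects, so it suffices to show that a closed $f\colon M\to N$ of degree zero factors through a contractible dg module if and only if it is null-homotopic. Forward: if $f=b\circ a$ with $a,b$ closed and $1_C=d(s)$, then $b\circ s\circ a$ has degree $-1$ and satisfies $d(b\circ s\circ a)=b\circ d(s)\circ a=f$, using $d(a)=0=d(b)$. Backward: given a degree $-1$ homotopy $h$ with $d(h)=f$, the pair $(f,\tilde h)$ — where $\tilde h\colon\Sigma M\to N$ is the degree-zero map corresponding to $h$ under the shift — will assemble into a closed morphism ${\rm Cone}(1_M)\to N$ factoring $f$ through the contractible ${\rm Cone}(1_M)$ via the canonical inclusion $\binom{1}{0}$. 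The main obstacle I anticipate is the sign and degree bookkeeping in step (3), where one must verify that the corrected extension $g_2=s\circ fh$ is genuinely closed; the remainder of the argument is driven formally by the biproduct description of graded-split conflations and by the explicit contracting homotopy on ${\rm Cone}(1_{\bullet})$.
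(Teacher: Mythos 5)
Your proof is correct, but it takes a genuinely different route from the paper's. The paper disposes of the proposition in one line by invoking the canonical adjoint triple between $Z^0(\mathcal{A}\mbox{-}{\rm DGMod})$ and the abelian category of graded $\mathcal{A}$-modules (see \cite[Lemma 2.2]{Ke94}): the graded-split conflations are exactly the sequences split by the forgetful functor to graded modules, so the Quillen axioms, the existence of enough projectives and injectives, and the Frobenius property all fall out formally from the adjunctions, the projective-injective objects being (summands of) the images of the two adjoints. Your argument is the hands-on version of the same mathematics: your $\mathrm{Cone}(1_M)$ and $\mathrm{Cone}(1_{\Sigma^{-1}M})$ are precisely the values of the right and left adjoints on the underlying graded module of $M$, and your explicit extension $g_2=s\circ fh$ --- whose closedness you verify correctly, the key point being that $d(h)=0$ is forced by $d^2=0$ on the twisted biproduct $\smatrix{d_X & h\\ 0 & d_Z}$ --- is the computation that the adjunction hides. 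What your route buys is explicitness: the contracting homotopy $\smatrix{0&0\\ \xi_M^{-1}&0}$ on $\mathrm{Cone}(1_M)$, the concrete identification projective $=$ injective $=$ contractible, and the matching in your step (4) of ``factors through a contractible'' with ``null-homotopic'' via $d(b\circ s\circ a)=f$ and the closed morphism $(f,h\circ\xi_M)\colon \mathrm{Cone}(1_M)\to N$; this is exactly why the stable category is $H^0(\mathcal{A}\mbox{-}{\rm DGMod})$, a point the paper leaves entirely to the reference. The price is the sign bookkeeping and the Quillen-axiom check you leave as ``essentially routine'' (it is: since $Z^0(\mathcal{A}\mbox{-}{\rm DGMod})$ is abelian, pushouts exist, and the graded splitting transfers along them because $fh$ is again closed of degree one). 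Two small steps you use implicitly and should record: a $Z^0$-direct summand of a contractible module is contractible (conjugate the contraction by the closed idempotent: $1_P=\pi\, d(s)\, \iota=d(\pi s\iota)$), which is what ``splits off and is therefore contractible'' needs; and in the backward direction of (4), closedness of $(f, h\circ\xi_M)$ holds precisely because $d(h)=d_N h+h\, d_M=f$ together with $\xi_M d_{\Sigma M}=-d_M\xi_M$.
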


\begin{proof}
The ingredient is the canonical adjoint triple between  $Z^0(\mathcal{A}\mbox{-}{\rm DGMod})$ and the abelian category of graded $\mathcal{A}$-modules; see \cite[Lemma 2.2]{Ke94}.
\end{proof}

We will write $\mathbf{K}(\mathcal{A})=H^0(\mathcal{A}\mbox{-}{\rm DGMod})$, the \emph{homotopy category} of dg $\mathcal{A}$-modules. A dg $\mathcal{A}$-module $M$ is \emph{acyclic} if each complex $M(x)$ is acyclic. These dg modules form  a triangulated subcategory $\mathbf{K}^{\rm ac}(\mathcal{A})$.

\begin{defn}
The \emph{derived category} of left dg $\mathcal{A}$-modules
   $$\mathbf{D}(\mathcal{A})=\mathbf{K}(\mathcal{A})/{\mathbf{K}^{\rm ac}(\mathcal{A})}.$$
\end{defn}

It turns out that $\mathbf{D}(\mathcal{A})$ is compactly generated by the  modules $\mathcal{A}(x, -)$.  The \emph{perfect derived category} of dg $\mathcal{A}$-modules is by definition the subcategory $\mathbf{perf}(\mathcal{A})=\mathbf{D}(\mathcal{A})^c$ of compact objects.

The following  is a fundamental theorem due to Keller; see \cite[Theorem 4.3]{Ke94}.

\begin{thm}\label{thm:1}
Each algebraic compactly generated triangulated category $\mathcal{T}$ is of the form $\mathbf{D}(\mathcal{A})$.
\end{thm}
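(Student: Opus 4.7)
The strategy is to construct a small dg category $\mathcal{A}$ from a dg enhancement of $\mathcal{T}$, and to realize $\mathcal{T}$ as $\mathbf{D}(\mathcal{A}^{\op})$ via a restricted Yoneda functor.

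Since $\mathcal{T}$ is algebraic, I would first fix a strongly pretriangulated dg category $\mathcal{E}$ with a triangle equivalence $\mathcal{T} \simeq H^0(\mathcal{E})$. Let $\mathcal{S} \subset \mathcal{T}$ be a chosen set of compact generators, lift each $S \in \mathcal{S}$ to an object of $\mathcal{E}$, and let $\mathcal{A}$ be the small full dg subcategory of $\mathcal{E}$ spanned by these lifts. The restricted Yoneda dg functor
$$\Phi \colon \mathcal{E} \longrightarrow \mathrm{DGMod}\text{-}\mathcal{A}, \qquad E \mapsto \mathcal{E}(-, E)|_\mathcal{A}$$
then descends, upon passage to $H^0$ and composition with the Verdier localization, to a triangulated functor $\bar\Phi \colon \mathcal{T} \to \mathbf{D}(\mathcal{A}^{\op})$. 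That $\bar\Phi$ is in fact triangulated uses that $\Phi$ is a dg functor and that cones in $H^0(\mathcal{E})$ are realized by the mapping cones from Subsection 3.2.

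I would then verify that $\bar\Phi$ is a triangle equivalence via three standard checks. First, its restriction to $\mathcal{A}$ is, under the identification $\Phi(x) = \mathcal{A}(-, x)$, the dg Yoneda embedding, which is fully faithful by the Yoneda computation recalled in Subsection 3.1. Second, $\bar\Phi$ preserves arbitrary coproducts: for each $x \in \mathcal{A}$ the evaluation $\Phi(-)(x) = \mathcal{E}(x, -)$ commutes with coproducts in $\mathcal{T}$ precisely because $x$ is compact there, while coproducts in $\mathbf{D}(\mathcal{A}^{\op})$ are computed componentwise on the Hom-complexes. Third, since the representables $\mathcal{A}(-, x)$ form a generating set of compact objects of $\mathbf{D}(\mathcal{A}^{\op})$ and $\mathcal{S}$ is a set of compact generators of $\mathcal{T}$, a standard devissage propagates the isomorphism $\mathcal{T}(x, y) \simeq \mathbf{D}(\mathcal{A}^{\op})(\bar\Phi(x), \bar\Phi(y))$ from $x, y \in \mathcal{A}$ to arbitrary objects by closure under coproducts and cones, and essential surjectivity follows because the essential image is a localizing subcategory of $\mathbf{D}(\mathcal{A}^{\op})$ containing the compact generators.

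The main obstacle is this last step: leveraging compactness of the $x \in \mathcal{A}$ on both sides to promote the Yoneda isomorphism from the generators to all objects of $\mathcal{T}$. It relies on two nontrivial inputs to be arranged separately, namely the compact generation of $\mathbf{D}(\mathcal{A}^{\op})$ by the representables $\mathcal{A}(-, x)$ (itself a consequence of the existence of semi-free or h-projective resolutions of dg modules), together with the verification that $\bar\Phi$ intertwines coproducts and triangles strictly enough that the usual five-lemma or Brown-representability style bootstrap can be executed to yield full faithfulness everywhere.
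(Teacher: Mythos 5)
Your proposal is correct and is essentially the paper's approach: the paper's proof is just the hint ``Lift the subcategory $\mathcal{T}^c$ of compact objects to a dg category!'', i.e.\ Keller's argument in \cite{Ke94}, which is exactly what you carry out by lifting a set of compact generators into a strongly pretriangulated enhancement and checking via the restricted Yoneda functor plus the localizing-subcategory devissage that one obtains a triangle equivalence with $\mathbf{D}(\mathcal{A}^{\rm op})$ (harmlessly renamed $\mathbf{D}(\mathcal{A})$). The standard inputs you flag --- existence of enhancements for algebraic categories, compact generation of the derived category by the representables, and the coproduct-plus-five-lemma bootstrap --- are precisely the facts the paper records elsewhere, so your expansion of the one-line hint is the intended one.
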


\begin{proof}
Lift the subcategory $\mathcal{T}^c$ of compact objects to a dg category!
\end{proof}

To actually work with $\mathbf{D}(\mathcal{A})$, we need dg-projective resolutions and dg-injective resolutions.

A dg $\mathcal{A}$-module $P$ is \emph{dg-projective}, if for each surjective quasi-isomorphism $\pi\colon X\rightarrow Y$  and every morphism $f\colon P\rightarrow Y$ both in $Z^0(\mathcal{A}\mbox{-DGMod})$, there is a lifting $\tilde{f}\colon P\rightarrow X$, also in $Z^0(\mathcal{A}\mbox{-DGMod})$.

\begin{lem}
A dg $\mathcal{A}$-module $P$ is dg-projective if and only if $P$ is projective as a graded $\mathcal{A}$-module and ${\rm Hom}_{\mathbf{K}(\mathcal{A})}(P, N)=0$ for any acyclic module $N$.
\end{lem}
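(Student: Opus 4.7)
The lemma is an equivalence; I would prove each direction separately, using in both cases an auxiliary surjective quasi-isomorphism of dg modules tailored to the data at hand.

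$(\Leftarrow)$ Assume $P$ is projective as a graded $\mathcal{A}$-module and that $\Hom_{\mathbf{K}(\mathcal{A})}(P,M)=0$ for every acyclic $M$. Given a surjective quasi-isomorphism $\pi\colon X\to Y$ in $Z^0(\mathcal{A}\mbox{-}{\rm DGMod})$ and a closed degree-$0$ morphism $f\colon P\to Y$, set $N:=\Ker(\pi)$, which is acyclic. Graded-projectivity of $P$ supplies a (generally not closed) graded lift $g\colon P\to X$ of $f$. Its defect $\delta:=d_X\circ g-g\circ d_P$ is a graded morphism of degree $+1$; a direct calculation gives $\pi\circ\delta=0$, so $\delta$ factors through $N$, and $\delta$ is closed in $\Hom_\mathcal{A}(P,N)$. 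Hence $[\delta]\in H^1(\Hom_\mathcal{A}(P,N))\cong\Hom_{\mathbf{K}(\mathcal{A})}(P,\Sigma N)$, which vanishes by hypothesis (as $\Sigma N$ is still acyclic). Writing $\delta=d_N\circ h-h\circ d_P$ for some degree-$0$ morphism $h\colon P\to N$, the morphism $\tilde f:=g-h$ is closed of degree $0$ and still satisfies $\pi\circ\tilde f=f$.

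$(\Rightarrow)$ Assume $P$ is dg-projective. For the vanishing of $\Hom_{\mathbf{K}(\mathcal{A})}(P,N)$ for acyclic $N$, given a closed $f\colon P\to N$ I construct an explicit contractible dg module $X$ together with a surjective closed morphism $\pi\colon X\to N$, so that both being acyclic $\pi$ is automatically a surjective quasi-isomorphism. A convenient choice is $X:=N\oplus\Sigma^{-1}N$ with differential $d_X(a,\Sigma^{-1}b)=(d_N a,\Sigma^{-1}(a-d_N b))$, with contracting homotopy $(a,\Sigma^{-1}b)\mapsto(b,0)$, and with $\pi$ the projection to $N$. Dg-projectivity then lifts $f$ to a closed $\tilde f\colon P\to X$, necessarily of the form $\tilde f(p)=(f(p),\Sigma^{-1}h(p))$; unpacking the closedness of $\tilde f$ gives exactly the relation $f=d_N\circ h+h\circ d_P$ with $h$ of degree $-1$, i.e., a null-homotopy of $f$. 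For the graded-projectivity of $P$, given a surjection $\pi_0\colon V\to W$ of graded $\mathcal{A}$-modules and a graded morphism $g\colon P\to W$, I invoke the canonical adjoint triple from \cite[Lemma~2.2]{Ke94} already used in the previous proposition. Its right adjoint $F_R$ to the forgetful functor sends a graded module $V$ to $F_R(V)=V\oplus\Sigma V$ with a canonical acyclic differential, enjoying the adjunction $\Hom_{Z^0}(M,F_R(V))\cong\Hom_{\mathrm{gr}}(M,V)$. Then $F_R(\pi_0)\colon F_R(V)\to F_R(W)$ is a surjection between acyclic dg modules, hence a surjective quasi-isomorphism; the adjoint of $g$ is a closed degree-$0$ morphism $\hat g\colon P\to F_R(W)$; dg-projectivity lifts $\hat g$ to a closed $\tilde{\hat g}\colon P\to F_R(V)$, and its adjoint $\tilde g\colon P\to V$ is a graded lift of $g$ by naturality.

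The main obstacle is the $(\Rightarrow)$ direction: both conclusions require fabricating, from either an acyclic $N$ or a graded surjection $\pi_0$, an appropriate surjective quasi-isomorphism of dg modules to which the definition of dg-projectivity may be applied. Once these auxiliary constructions (the cone-type cover $X\to N$ and the adjoint $F_R$) are in hand, what remains is routine diagram chasing with signs.
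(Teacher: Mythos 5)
Your proposal is correct in both directions; note that the paper itself states this lemma without proof, so there is nothing to compare against directly. Your argument is the standard one and fits the paper's own toolkit: in $(\Leftarrow)$, the defect $\delta=d(g)$ of a graded lift $g$ is a closed degree-one element of $\Hom_{\mathcal{A}}(P,\Ker\pi)$ whose class in $H^1\Hom_{\mathcal{A}}(P,\Ker\pi)\cong\Hom_{\mathbf{K}(\mathcal{A})}(P,\Sigma\Ker\pi)$ vanishes because $\Ker\pi$ is acyclic (degreewise short exact sequence plus the long exact cohomology sequence, which you should spell out), and correcting $g$ by the resulting homotopy $h$ gives a closed lift --- this computation is sound, including the sign $d(h)=d_N h-h\,d_P$ in degree zero. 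In $(\Rightarrow)$, both auxiliary surjective quasi-isomorphisms you fabricate are really two faces of the same construction: your $X=N\oplus\Sigma^{-1}N$ is the cone over the identity of $\Sigma^{-1}N$ (contractible, with the contracting homotopy you wrote, and one checks $d_X$ is compatible with the $\mathcal{A}$-action since the off-diagonal entry is the closed degree-one isomorphism $N\to\Sigma^{-1}N$), while $F_R$ is exactly the right adjoint in the adjoint triple of \cite[Lemma 2.2]{Ke94} that the paper already invokes for the Frobenius structure on $Z^0(\mathcal{A}\mbox{-}{\rm DGMod})$; morphisms in $Z^0$ into $F_R(V)$ correspond to degree-zero graded morphisms into $V$, and your naturality check $\pi_0\circ\tilde g=g$ is correct. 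The only points worth tightening in a written version are the acyclicity of $\Ker\pi$ and the verification that the perturbed differential on $N\oplus\Sigma^{-1}N$ satisfies the Leibniz rule over $\mathcal{A}$; neither is a gap, just an omitted routine check.
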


The dg-projective resolution is as follows.

\begin{lem}
Each dg module $X$ admits a surjective quasi-isomorphism
$${\bf p}(X)\longrightarrow X$$
with ${\bf p}(X)$ dg-projective, which is unique up to a unique isomorphism in $\mathbf{K}(\mathcal{A})$.
\end{lem}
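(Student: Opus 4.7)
My plan is to construct $\bfp(X)$ by iterated attachment of cells built from representables, and to deduce uniqueness from the dg-projective lifting property together with the preceding lemma.

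\medskip

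\textbf{Existence.} I would first observe that each representable $\mathcal{A}(y,-)$ is itself dg-projective: by the Yoneda isomorphism ${\rm Hom}_\mathcal{A}(\mathcal{A}(y,-), N) \cong N(y)$, a lifting problem against a surjective quasi-isomorphism $\pi\colon W\rightarrow Y$ reduces to lifting an element of $Y(y)$ through the surjection $\pi_y\colon W(y)\rightarrow Y(y)$, which is always possible; moreover, $\mathcal{A}(y,-)$ is projective as a graded module and satisfies ${\rm Hom}_{\mathbf{K}(\mathcal{A})}(\mathcal{A}(y,-), N) = H^0(N(y)) = 0$ for acyclic $N$. Both characterizing conditions in the preceding lemma are stable under arbitrary direct sums, suspensions, mapping cones along closed degree zero maps, and filtered colimits along graded-split inclusions, so any module assembled from representables by these operations is again dg-projective.

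\medskip

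With these building blocks I would construct $\bfp(X)$ as a transfinite filtered colimit of modules $P_i$ equipped with closed degree zero maps $f_i\colon P_i\rightarrow X$. Start with $P_0 = \bigoplus_j \mathcal{A}(y_j,-)[n_j]$ together with the tautological map to $X$, taking enough summands so that $P_0\rightarrow X$ is surjective. At each successor stage, attach cells of two kinds: additional shifted representables hitting cocycles in $X$ not yet in the image on cohomology, and mapping cones $P_{i+1} = {\rm Cone}(z)$ along cocycles $z\colon \Sigma^{-n}\mathcal{A}(y,-)\rightarrow P_i$ whose image $f_i(z)$ is a coboundary $d_X(u)$ in $X$, extending $f_i$ by sending the new generator to $u$. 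Passing to the filtered colimit $\bfp(X)$ yields a surjective map whose induced map on cohomology is surjective by construction and injective because every troublesome cocycle class is eventually killed.

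\medskip

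\textbf{Uniqueness.} Suppose $\pi'\colon P'\rightarrow X$ is another surjective quasi-isomorphism with $P'$ dg-projective. The dg-projectivity of $\bfp(X)$ applied to $\pi'$ produces a closed degree zero morphism $\varphi\colon \bfp(X)\rightarrow P'$ with $\pi'\circ\varphi = \pi$. Its mapping cone is dg-projective and acyclic, hence zero in $\mathbf{K}(\mathcal{A})$ by the preceding lemma, so $\varphi$ is an isomorphism in $\mathbf{K}(\mathcal{A})$. For uniqueness of $\varphi$ in $\mathbf{K}(\mathcal{A})$, the difference of any two such lifts factors through the kernel of $\pi'$, which is acyclic since $\pi'$ is a surjective quasi-isomorphism, and ${\rm Hom}_{\mathbf{K}(\mathcal{A})}(\bfp(X), \ker\pi') = 0$ by the preceding lemma.

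\medskip

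\textbf{Main obstacle.} The delicate point is verifying that the transfinite filtered colimit of the $P_i$ is still dg-projective and that the process terminates in a quasi-isomorphism. This is essentially a small object argument, which requires one to organize the cell attachments by a sufficiently large ordinal and to check that both characterizing conditions of dg-projectivity pass through graded-split transfinite compositions. Once this structural point is granted, existence and the uniqueness up to a unique isomorphism in $\mathbf{K}(\mathcal{A})$ follow from the purely formal arguments above.
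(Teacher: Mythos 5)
The paper itself states this lemma without proof, deferring to \cite{Ke94}; your cellular (small-object) strategy is indeed the standard route, essentially Keller's construction of resolutions with ``property (P)''. However, one step fails as written: your stage $P_0=\bigoplus_j \mathcal{A}(y_j,-)[n_j]$ cannot in general be made to surject onto $X$. By the Yoneda isomorphism of \emph{complexes} ${\rm Hom}_\mathcal{A}(\mathcal{A}(y,-),X)\cong X(y)$, a closed degree-zero morphism $\Sigma^{-n}\mathcal{A}(y,-)\rightarrow X$ corresponds to a degree-$n$ \emph{cocycle} of $X(y)$, so morphisms in $Z^0(\mathcal{A}\mbox{-}{\rm DGMod})$ out of sums of shifted representables only reach the dg submodule of $X$ generated by cocycles. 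Already for $\mathcal{A}=k$ and $X$ the contractible complex $k\stackrel{1}\longrightarrow k$ concentrated in degrees $0$ and $1$, every closed map from a sum of shifted copies of $k$ lands in the degree-one component, so no $P_0$ of your form is surjective. The repair is standard: besides spheres, attach \emph{discs} ${\rm Cone}(1_{\Sigma^{-n}\mathcal{A}(y,-)})$ --- the module-level analogue of the paper's $\mathcal{D}(n)$ versus $\mathcal{S}(n-1)$ --- out of which closed degree-zero maps correspond to arbitrary elements of $X(y)$; discs are contractible, hence dg-projective, so adding them forces surjectivity without changing cohomology. (Equivalently: first build any quasi-isomorphism $P\rightarrow X$ by your procedure, then add one contractible summand ${\rm Cone}(1_Q)$ for a graded-projective epimorphism $Q\rightarrow X$.)

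Three further points need tightening. First, your claim that both characterizing conditions are ``stable under filtered colimits along graded-split inclusions'' is not a formal fact and is where the real content lies: what one proves, and all you need, is closure under (countable) chains $P_0\subseteq P_1\subseteq\cdots$ of graded-split inclusions whose subquotients are built from representables; graded projectivity of the colimit follows from inductively chosen complements, while the vanishing of ${\rm Hom}_{\mathbf{K}(\mathcal{A})}({\rm colim}\, P_i, N)$ for acyclic $N$ requires a Milnor triangle, i.e.\ a $\lim^1$ argument --- arbitrary filtered colimits of dg-projectives are \emph{not} dg-projective (e.g.\ $\mathbb{Q}$ over $\mathbb{Z}$), so the chain structure and splitness do genuine work. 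Note also that a countable iteration suffices, since $H^\ast$ commutes with sequential colimits; no large ordinal is needed. Second, in your verification that representables are dg-projective, lifting ``an element of $Y(y)$'' is not enough: one must lift a cocycle to a cocycle, and surjectivity of $\pi_y$ alone only yields an arbitrary preimage $w$; one corrects $w$ by an element of $\ker\pi$ using that this kernel is acyclic --- so the quasi-isomorphism hypothesis enters here, not just surjectivity. Third, for the uniqueness clause you compare only lifts with $\pi'\circ\varphi=\pi$ strictly in $Z^0$; uniqueness in $\mathbf{K}(\mathcal{A})$ concerns $\varphi$ with $\pi'\circ\varphi$ homotopic to $\pi$, say $\pi'\varphi-\pi=d(h)$. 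Lift $h$ through the graded epimorphism $\pi'$ using graded projectivity of $\mathbf{p}(X)$, replace $\varphi$ by $\varphi-d(\tilde h)$, and then your factorization through the acyclic $\ker\pi'$ applies. With these repairs your argument is complete and agrees in substance with \cite{Ke94}.
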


Denote by $\mathbf{K}_{\rm p}(\mathcal{A})$ the homotopy category of dg-projective modules. Then the dg-projective resolution yields a triangle equivalence
$${\bf p}\colon \mathbf{D}(\mathcal{A})\longrightarrow \mathbf{K}_{\rm p}(\mathcal{A}).$$
Dually, we have the \emph{dg-injective resolution} $X \rightarrow {\bf i}(X)$ and the equivalence
 $${\bf i}\colon \mathbf{D}(\mathcal{A})\longrightarrow \mathbf{K}_{\rm i}(\mathcal{A}).$$
  To summarize, we have a recollement
\begin{align}
\xymatrix{
\mathbf{K}^{\rm ac}(\mathcal{A}) \ar[rr]|{{\rm inc}} &&  \mathbf{K}(\mathcal{A}) \ar[rr]|{\rm can}  \ar@/_1pc/[ll] \ar@/^1pc/[ll] &&   \mathbf{D}(\mathcal{A}).   \ar@/_1pc/[ll]|{\bf p} \ar@/^1pc/[ll]|{\bf i}
}
\end{align}

\subsection{Derived functors and recollements}

We will discuss derived functors.

$${\boxed{\bf \mbox{resolutions make life easier, but not really easier!}}}$$

\vskip 10pt

For a dg $\mathcal{A}$-$\mathcal{B}$-bimodule $X$, we have
$$X\otimes_\mathcal{B}^\mathbb{L}-\colon \mathbf{D}(\mathcal{B})\stackrel{\bf p}\longrightarrow \mathbf{K}(\mathcal{B}) \xrightarrow{X\otimes_\mathcal{B}-} \mathbf{K}(\mathcal{A})\stackrel{\rm can}\longrightarrow \mathbf{D}(\mathcal{A})$$
and
$$\mathbb{R}{\rm Hom}_\mathcal{A}(X, -)\colon \mathbf{D}(\mathcal{A})\stackrel{\bf i}\longrightarrow \mathbf{K}(\mathcal{A}) \xrightarrow{{\rm Hom}_\mathcal{A}(X, -)} \mathbf{K}(\mathcal{B})\stackrel{\rm can}\longrightarrow \mathbf{D}(\mathcal{B}).$$
They form an adjoint pair $(X\otimes_\mathcal{B}^\mathbb{L}-, \mathbb{R}{\rm Hom}_\mathcal{A}(X, -))$.

\begin{exm}
Let $F\colon \mathcal{B}\rightarrow \mathcal{A}$ be a dg functor. Then we have an adjoint triple
\[\xymatrix{
 \mathbf{D}(\mathcal{A}) \ar[rr] &&   \mathbf{D}(\mathcal{B}),   \ar@/_1pc/[ll]|{\bf F_*} \ar@/^1pc/[ll]|{\bf F_!}
}\]
where $F_*={_1\mathcal{A}_F}\otimes^\mathbb{L}_\mathcal{B}-$, $F_!=\mathbb{R}{\rm Hom}_\mathcal{B}({_F\mathcal{A}_1}, -)$ and the unnamed arrow sends $M$ to the composition $MF$. We observe that it is isomorphic to both $\mathbb{R}{\rm Hom}_\mathcal{A}({_1\mathcal{A}}_F, -)$ and ${_F\mathcal{A}_1}\otimes^\mathbb{L}_\mathcal{A}-$.
\end{exm}

\begin{prop}\label{prop:rec}
Assume that $j\colon \mathcal{B}\rightarrow \mathcal{A}$ is a quasi-fully faithful dg functor. Then there is a recollement
\begin{align}\xymatrix{
\mathcal{K} \ar[rr]|{{\rm inc}} &&  \mathbf{D}(\mathcal{A}) \ar[rr]|{\rm res}  \ar@/_1pc/[ll] \ar@/^1pc/[ll] &&   \mathbf{D}(\mathcal{B}).   \ar@/_1pc/[ll]|{j_*} \ar@/^1pc/[ll]|{j_!}
}
\end{align}
Moreover, there is a dg category $\mathcal{C}$ such that $\mathcal{K}$ is triangle equivalent to $\mathbf{D}(\mathcal{C})$. Consequently, we have a triangle equivalence up to direct summands
$$\mathbf{perf}(\mathcal{A})/{\mathbf{perf}(\mathcal{B})} \longrightarrow \mathbf{perf}(\mathcal{C}).$$
\end{prop}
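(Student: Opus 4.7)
The plan is to build the recollement from the adjoint triple in the preceding example and then identify the kernel of restriction with the derived category of the Drinfeld--Keller dg quotient $\mathcal{C}=\mathcal{A}/j(\mathcal{B})$.

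First I would set up the three functors. The example just before the proposition gives an adjoint triple $(j_*, \mathrm{res}, j_!)$ with $\mathrm{res}$ sending $M$ to $Mj$ and with $j_*={_1\mathcal{A}_j}\otimes^{\mathbb{L}}_{\mathcal{B}}-$ and $j_!=\mathbb{R}\mathrm{Hom}_{\mathcal{B}}({_j\mathcal{A}_1},-)$. To obtain a recollement I need that $j_*$ and $j_!$ are fully faithful, equivalently that the unit $\mathrm{Id}\to \mathrm{res}\circ j_*$ and the counit $\mathrm{res}\circ j_!\to \mathrm{Id}$ are isomorphisms. Since $\mathbf{D}(\mathcal{B})$ is compactly generated by the free modules $\mathcal{B}(y,-)$, it suffices to check the unit on these generators, where it reduces precisely to the chain map $\mathcal{B}(y,x)\to \mathcal{A}(jy,jx)$ that is a quasi-isomorphism by the quasi-fully faithful hypothesis. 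A dual verification works for the counit, using dg-injective resolutions. Then I set $\mathcal{K}$ to be the kernel of $\mathrm{res}$, i.e.\ the full subcategory of $M\in \mathbf{D}(\mathcal{A})$ with $M(jx)$ acyclic for every $x\in\mathcal{B}$, and the standard recollement formalism produces the two adjoints to the inclusion.

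Next I would identify $\mathcal{K}\simeq \mathbf{D}(\mathcal{C})$. There are two natural ways. The clean route is to invoke Keller's Theorem~\ref{thm:1}: $\mathcal{K}$ is a localising subcategory of the algebraic compactly generated triangulated category $\mathbf{D}(\mathcal{A})$, so it suffices to exhibit a set of compact generators; these are obtained as the images of the compact generators $\mathcal{A}(x,-)$ of $\mathbf{D}(\mathcal{A})$ under the left adjoint $\mathbf{D}(\mathcal{A})\to \mathcal{K}$ to the inclusion, and lifting the full dg subcategory they span produces the desired dg category $\mathcal{C}$. Alternatively, and more constructively, one takes $\mathcal{C}=\mathcal{A}/j(\mathcal{B})$ in the sense of the Drinfeld--Keller dg quotient (the construction whose development is the main purpose of this note); then an explicit comparison shows that the canonical dg functor $\mathcal{A}\to \mathcal{C}$ induces an equivalence $\mathbf{D}(\mathcal{C})\simeq \mathcal{K}$.

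Finally, the statement about perfect objects follows from the general principle that the left adjoint $\mathbf{D}(\mathcal{A})\to \mathcal{K}$ preserves compactness (its right adjoint, the inclusion, preserves set-indexed coproducts), so it restricts to a triangle functor $\mathbf{perf}(\mathcal{A})\to \mathbf{perf}(\mathcal{C})$ which annihilates $\mathbf{perf}(\mathcal{B})$ and thus factors through the Verdier quotient $\mathbf{perf}(\mathcal{A})/\mathbf{perf}(\mathcal{B})$. The resulting functor is an equivalence up to direct summands by the Neeman--Thomason localisation theorem, i.e.\ compact objects of a Bousfield quotient of a compactly generated triangulated category coincide, up to direct summands, with the Verdier quotient of the compact objects.

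The routine inputs (the adjoint triple, preservation of compactness, Neeman--Thomason) are standard; the main obstacle is the identification $\mathcal{K}\simeq \mathbf{D}(\mathcal{C})$ for a concrete $\mathcal{C}$, since without the explicit Drinfeld--Keller dg quotient one only gets an abstract dg model via Theorem~\ref{thm:1}. Writing out an explicit quasi-equivalence between $\mathcal{A}/j(\mathcal{B})$ and a dg model of $\mathcal{K}$ (e.g.\ via dg-projective resolutions of the free modules $\mathcal{A}(x,-)$ together with their images under the Bousfield localisation) is the technical heart of the argument.
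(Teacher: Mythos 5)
Your proposal is correct and follows essentially the same route as the paper's proof: full faithfulness of $j_*$ deduced from quasi-fully faithfulness of $j$ (checked on the compact generators $\mathcal{B}(y,-)$, where the unit reduces to the quasi-isomorphism $\mathcal{B}(y,x)\to\mathcal{A}(jy,jx)$), the identification $\mathcal{K}\simeq\mathbf{D}(\mathcal{C})$ via Theorem \ref{thm:1} applied to the algebraic, compactly generated category $\mathcal{K}$, and the Neeman--Thomason localization theorem for the equivalence up to direct summands on perfect objects. Only your alternative ``constructive'' identification $\mathcal{C}=\mathcal{A}/j(\mathcal{B})$ would require the extra homotopical flatness or dg-projectivity hypotheses of Theorem \ref{thm:2} (and a replacement of $j$ by a full embedding), which is exactly why the proposition, stated without such hypotheses, invokes the abstract Theorem \ref{thm:1} instead.
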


\begin{proof}
We observe that $j_*$ is fully faithful, since it extends $H^0(j)$. The last statement follows from Theorem \ref{thm:1}.
\end{proof}

The following problem will be central for us:

$${\boxed{\bf \mbox{what is the above dg category $\mathcal{C}$?}}}$$

\section{The dg quotient category}

In this section, we recall the explicit construction of Drinfeld on the dg quotient category; see \cite[Section 3]{Dri}. It solves the above problem in an elegant way.

\subsection{The construction of Drinfeld}

Let $\mathcal{B}\subseteq \mathcal{A}$ be a full dg subcategory. Denote by $j\colon \mathcal{B}\rightarrow \mathcal{A}$  the inclusion.

\begin{defn}\label{defn:dgq}
The \emph{dg quotient category} $\mathcal{A}/\mathcal{B}$ is defined as follows:
\begin{enumerate}
\item[$\bullet$] ${\rm obj}(\mathcal{A}/\mathcal{B})={\rm obj}(\mathcal{A})$;
\item[$\bullet$] freely add new morphisms $\varepsilon_U\colon U\rightarrow U$ of degree $-1$ for each $U\in {\rm obj}(\mathcal{B})$, and set $d(\varepsilon_U)=1_U$.
\end{enumerate}
\end{defn}

Denote by $\pi\colon \mathcal{A}\rightarrow \mathcal{A}/\mathcal{B}$ the canonical functor.

\begin{rem}
For any objects $x$ and $y$, we have
$$\mathcal{A}/\mathcal{B}(x, y)=\bigoplus_{n\geq 0} \bigoplus_{U_i\in {\rm obj}(\mathcal{B})} \mathcal{A}(U_n, y)\otimes k\varepsilon_{U_n}\otimes \cdots \otimes k\varepsilon_{U_2}\otimes \mathcal{A}(U_1, U_2)\otimes k\varepsilon_{U_1}\otimes \mathcal{A}(x, U_1).$$
This is only a decomposition of graded $k$-modules, not of complexes. Indeed, it gives rise to an ascending filtration
$$\mathcal{A}(x, y)={\mathcal{A}/\mathcal{B}}^0(x, y)\subseteq {\mathcal{A}/\mathcal{B}}^{\leq 1}(x, y)\subseteq {\mathcal{A}/\mathcal{B}}^{\leq 2}(x, y)\subseteq \cdots$$
of subcomplexes, whose factors are denoted by ${\mathcal{A}/\mathcal{B}}^n(x, y)$ or ${\rm Hom}_{\mathcal{A}/\mathcal{B}}^n(x, y)$. We observe that each object $U$ in $\mathcal{B}$ becomes contractible in $\mathcal{A}/\mathcal{B}$.
\end{rem}

Under mild assumptions, the above dg quotient category describes the dg category $\mathcal{C}$ in Proposition \ref{prop:rec}; see \cite[Proposition 4.6(ii)]{Dri}.

\begin{thm}\label{thm:2}
Assume that $\mathcal{A}(x, U)$ is a dg-projective $k$-module for each $x\in {\rm obj}(\mathcal{A})$ and $U\in {\rm obj}(\mathcal{B})$. Then there is a recollement
\begin{align}\xymatrix{
\mathbf{D}(\mathcal{A}/\mathcal{B}) \ar[rr]|{{\rm can}}  &&  \mathbf{D}(\mathcal{A}) \ar[rr]|{\rm res}  \ar@/_1pc/[ll]|{\pi_*} \ar@/^1pc/[ll]|{\pi_!} &&   \mathbf{D}(\mathcal{B}),   \ar@/_1pc/[ll]|{j_*} \ar@/^1pc/[ll]|{j_!}
}
\end{align}
where ``${\rm can}$" sends $M$ to the composition $M\pi$. Consequently, we have a triangle equivalence up to direct summands
$$\mathbf{perf}(\mathcal{A})/{\mathbf{perf}(\mathcal{B})} \longrightarrow \mathbf{perf}(\mathcal{A}/\mathcal{B}).$$
 \end{thm}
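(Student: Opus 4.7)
The plan is to apply Proposition \ref{prop:rec} to the inclusion $j\colon \mathcal{B}\hookrightarrow \mathcal{A}$ (which is certainly quasi-fully faithful, being dg fully faithful) to obtain a recollement with some kernel subcategory
$$\mathcal{K}=\{M\in \mathbf{D}(\mathcal{A})\;|\; M(U) \text{ is acyclic for every } U\in {\rm obj}(\mathcal{B})\},$$
and then to identify $\mathcal{K}$ with $\mathbf{D}(\mathcal{A}/\mathcal{B})$ via the canonical functor ${\rm can}\colon M\mapsto M\pi$. The dg-projectivity hypothesis on $\mathcal{A}(x,U)$ is what makes this identification work at the level of derived categories.

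First I would show that the image of ${\rm can}$ lies in $\mathcal{K}$: by construction of $\mathcal{A}/\mathcal{B}$ (Definition \ref{defn:dgq}), the morphism $\varepsilon_U$ with $d(\varepsilon_U)=1_U$ makes every $U\in {\rm obj}(\mathcal{B})$ contractible in $\mathcal{A}/\mathcal{B}$, hence for each dg $\mathcal{A}/\mathcal{B}$-module $M$ the complex $(M\pi)(U)=M(U)$ is contractible, so ${\rm can}(M)\in \mathcal{K}$. Next, viewing $\pi$ as a dg functor, the general adjoint triple from the Example preceding Proposition \ref{prop:rec} yields adjoints $\pi_!=\mathbb{R}{\rm Hom}_{\mathcal{A}/\mathcal{B}}({_\pi(\mathcal{A}/\mathcal{B})_1},-)$ and $\pi_*={_1(\mathcal{A}/\mathcal{B})_\pi}\otimes^{\mathbb{L}}_{\mathcal{A}}-$ on either side of ${\rm can}$.

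The main step (and the main obstacle) is to prove that ${\rm can}$ is fully faithful, equivalently, that the unit $\mathrm{id}\to \pi_*\,{\rm can}$ is an isomorphism in $\mathbf{D}(\mathcal{A}/\mathcal{B})$. By testing on the generators $\mathcal{A}/\mathcal{B}(z,-)$, this reduces to checking that the natural morphism
$$ \mathcal{A}/\mathcal{B}(z,y) \longrightarrow \mathbb{R}{\rm Hom}_{\mathcal{A}}\bigl(\mathcal{A}/\mathcal{B}(z,-)\pi, \mathcal{A}/\mathcal{B}(y,-)\pi\bigr)(z) $$
is a quasi-isomorphism, which one computes using the bar-type filtration
$$\mathcal{A}/\mathcal{B}(x,y)=\bigoplus_{n\geq 0}\bigoplus_{U_i\in {\rm obj}(\mathcal{B})} \mathcal{A}(U_n,y)\otimes k\varepsilon_{U_n}\otimes \cdots \otimes \mathcal{A}(U_1,U_2)\otimes k\varepsilon_{U_1}\otimes \mathcal{A}(x,U_1).$$
The associated graded terms are tensor products of various $\mathcal{A}(-,U)$, and the hypothesis that each $\mathcal{A}(x,U)$ is dg-projective over $k$ guarantees that the tensors and Homs computed on this filtration agree with their derived versions. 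Essential surjectivity onto $\mathcal{K}$ then follows formally: for any $N\in \mathcal{K}$, the counit ${\rm can}\,\pi_*(N)\to N$ has cone both in $\mathcal{K}$ and in the essential image of ${\rm can}$-adjoints, forcing it to vanish.

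Once the recollement with $\mathcal{K}\simeq \mathbf{D}(\mathcal{A}/\mathcal{B})$ is established, the final statement about perfect derived categories is immediate from Proposition \ref{prop:rec} and Theorem \ref{thm:1}: the Verdier quotient $\mathbf{perf}(\mathcal{A})/\mathbf{perf}(\mathcal{B})$ is, up to direct summands, the subcategory of compact objects of the kernel of the recollement, and that kernel is $\mathbf{D}(\mathcal{A}/\mathcal{B})$, whose compact objects form $\mathbf{perf}(\mathcal{A}/\mathcal{B})$.
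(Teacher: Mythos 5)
Your skeleton matches the paper's strategy (the recollement of Proposition \ref{prop:rec} for $j$, the adjoint triple $(\pi_*,{\rm can},\pi_!)$ along $\pi$, and the easy inclusion ${\rm Im}\,{\rm can}\subseteq {\rm Ker}\,{\rm res}$ via contractibility of each $U$), but both of your load-bearing steps are unproved or mis-stated. For full faithfulness, the single sentence asserting that the bar-type filtration together with dg-projectivity of the $\mathcal{A}(x,U)$ ``guarantees that the tensors and Homs \dots agree with their derived versions'' is the entire content of the theorem, and such filtration-plus-cohomology manipulations are exactly where care is required (compare the paper's explicit WARNING, in the proof of Theorem \ref{thm:3}, that $H^i$ and colimits do not naively commute in arguments of this shape). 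The paper isolates the needed computation as Lemma \ref{lem:cone}: the cone of the unit $\theta_X\colon X\to {\rm can}\,\pi_*(X)$ lies in ${\rm Im}\,j_*$, proved by reducing to $X=\mathcal{A}(x,-)$ and using the exact sequence $0\to \mathcal{A}(x,-)\to \mathcal{A}/\mathcal{B}(x,-)\to \mathcal{A}/\mathcal{B}^{\geq 1}(x,-)\to 0$, where the dg-projectivity hypothesis is what shows the quotient is generated by the $\mathcal{A}(U,-)$'s. You would need to prove this lemma (or a genuinely detailed version of your $\mathbb{R}{\rm Hom}$ computation on generators, whose displayed formula is also garbled: the Yoneda identification gives $\mathcal{A}/\mathcal{B}(y,z)$ and there is no evaluation at $z$ of an $\mathbb{R}{\rm Hom}$ over $\mathcal{A}$). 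A bookkeeping slip compounds this: ${\rm can}$ is the \emph{right} adjoint of $\pi_*$, so its full faithfulness is equivalent to invertibility of the counit $\pi_*\,{\rm can}\to {\rm id}$; the map ${\rm id}\to \pi_*\,{\rm can}$ you call the unit does not exist in this adjunction. The paper gets the counit statement cheaply from Lemma \ref{lem:cone}: ${\rm can}$ reflects isomorphisms, and ${\rm can}(\delta_Y)$ is invertible because $\theta_{{\rm can}(Y)}$ is, since for $X\in {\rm Ker}\,{\rm res}$ the cone of $\theta_X$ lies in ${\rm Im}\,j_*\cap {\rm Ker}\,{\rm res}=0$.

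Your essential-surjectivity step is wrong as stated, not merely incomplete. The map ``${\rm can}\,\pi_*(N)\to N$'' is the unit $N\to {\rm can}\,\pi_*(N)$ written backwards, and placing its cone ``in the essential image of the ${\rm can}$-adjoints'' yields no vanishing whatsoever: the subcategory whose intersection with $\mathcal{K}={\rm Ker}\,{\rm res}$ is zero is ${\rm Im}\,j_*$, and putting the cone there is precisely the nontrivial Lemma \ref{lem:cone} — so your ``formal'' step silently presupposes the very computation you deferred. In general a fully faithful functor with image contained in $\mathcal{K}$ is not essentially surjective onto $\mathcal{K}$. Note that the paper also gives a second, independent argument for ${\rm Ker}\,{\rm res}\subseteq {\rm Im}\,{\rm can}$ that your proposal misses entirely and that uses the hypothesis in a concrete way: replace $N\in\mathcal{K}$ by a dg-projective model $X$; the assumption forces each $X(U)$ to be a dg-projective acyclic complex of $k$-modules, hence contractible; choosing a contracting homotopy $s_U$ for each $U\in {\rm obj}(\mathcal{B})$ and letting $\varepsilon_U$ act by $s_U$ lifts $X$, via the freeness in Definition \ref{defn:dgq}, to a dg $\mathcal{A}/\mathcal{B}$-module whose restriction along $\pi$ is $X$. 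Your final paragraph on $\mathbf{perf}(\mathcal{A})/\mathbf{perf}(\mathcal{B})\to \mathbf{perf}(\mathcal{A}/\mathcal{B})$ is fine and agrees with the paper, but it only becomes available once the recollement is actually established.
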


 \begin{proof}
 It suffices to prove that ${\rm Im}\; {\rm can}={\rm Ker}\; {\rm res}$ and that ``${\rm can}$" is fully faithful.

 We observe ${\rm Ker}\; {\rm res}=\{_\mathcal{A}X\; |\; \mbox{the restriction }X|_{\mathcal{B}} \mbox{ is acyclic}\}$. Then ${\rm Im}\; {\rm can}\subseteq{\rm Ker}\; {\rm res}$. Conversely, let $_\mathcal{A}X\in {\rm Ker}\; {\rm res}$ be dg-projective. The assumption implies that the complex $X(U)$ of $k$-modules is dg-projective, and thus contractible. Take a contracting homotopy $s_U$ of degree $-1$ for each $U\in {\rm obj}(\mathcal{B})$. Using the universal property, we lift $X$ to a dg module over $\mathcal{A}/\mathcal{B}$.

 For the fully-faithfulness of ``${\rm can}$", we observe that ${\rm can}(f)$ is an isomorphism if and only if so is $f$.  We apply Lemma \ref{lem:cone} to each $X\in {\rm Ker}\; {\rm res}$. It follows that ${\rm Cone}(\theta_X)\in {\rm Im}\;j_*\cap {\rm Ker}\; {\rm res}=0$. So, $\theta_X$ is an isomorphism. This yields another proof of the above equality.

 Now, for each dg $\mathcal{A}/\mathcal{B}$-module $Y$, the counit $\delta_Y\colon \pi_*{\rm can}(Y)\rightarrow Y$ has to be an isomorphism, since ${\rm can}(\delta_Y)$ is an isomorphism.
 \end{proof}

 The key observation is as follows.

 \begin{lem}\label{lem:cone}
 For each $_\mathcal{A}X$, the cone of the unit $\theta_X\colon X\rightarrow {\rm can}\pi_*(X)$ lies in ${\rm Im}\; j_*$.
 \end{lem}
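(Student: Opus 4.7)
The plan is to compute $\mathrm{Cone}(\theta_X)$ explicitly via a dg-projective resolution of $X$, filter it by the natural ascending filtration on $\mathcal{A}/\mathcal{B}$, and recognize each subquotient as an object built from representables $\mathcal{A}(U,-)$ with $U\in\mathcal{B}$.

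First, I would take a dg-projective resolution $P\to X$. Then $\pi_*(X)$ is modeled by the dg $\mathcal{A}/\mathcal{B}$-module $(\mathcal{A}/\mathcal{B})\otimes_\mathcal{A} P$, and the unit $\theta_X$ is represented in $\mathbf{D}(\mathcal{A})$ by the morphism
\[
P\longrightarrow (\mathcal{A}/\mathcal{B})\otimes_\mathcal{A} P,\qquad p\mapsto 1\otimes p.
\]
This arises from the inclusion of $\mathcal{A}$-bimodules $\mathcal{A}=F^0\hookrightarrow \mathcal{A}/\mathcal{B}$, which is graded-split because the filtration pieces $F^{\leq n}$ decompose as a graded direct sum. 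Hence, after tensoring with $P$, the inclusion remains graded-split, and $\mathrm{Cone}(\theta_X)$ is quasi-isomorphic to $((\mathcal{A}/\mathcal{B})/\mathcal{A})\otimes_\mathcal{A} P$.

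Next, I would use the induced ascending filtration on $(\mathcal{A}/\mathcal{B})/\mathcal{A}$ by $F^{\leq n}/F^{\leq 0}$ for $n\geq 1$, whose subquotients are the dg $\mathcal{A}$-bimodules $\bar F^n = F^{\leq n}/F^{\leq n-1}$ (on which the rule $d(\varepsilon_U)=1_U$ has been killed at the graded level). Explicitly,
\[
\bar F^n(x,y)=\bigoplus_{U_1,\ldots,U_n\in\mathcal{B}}\mathcal{A}(U_n,y)\otimes k\varepsilon_{U_n}\otimes \mathcal{A}(U_{n-1},U_n)\otimes\cdots\otimes k\varepsilon_{U_1}\otimes \mathcal{A}(x,U_1).
\]
Tensoring with $P$ absorbs the rightmost factor $\mathcal{A}(-,U_1)$ into $P(U_1)$, so $\bar F^n\otimes_\mathcal{A} P$ becomes a direct sum (indexed by tuples $(U_1,\ldots,U_n)$) of dg $\mathcal{A}$-modules of the form $\mathcal{A}(U_n,-)\otimes_k V$ for complexes $V$ of $k$-modules. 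Since $U_n\in\mathcal{B}$, one has $\mathcal{A}(U_n,-)\otimes_k V\cong j_*(\mathcal{B}(U_n,-)\otimes_k V)$, so each such summand, and hence $\bar F^n\otimes_\mathcal{A} P$ itself, lies in $\mathrm{Im}\,j_*$.

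Finally, I would assemble the $\bar F^n\otimes_\mathcal{A} P$ via the distinguished triangles of the filtration and pass to a (homotopy) colimit, concluding that $((\mathcal{A}/\mathcal{B})/\mathcal{A})\otimes_\mathcal{A} P\in\mathrm{Im}\,j_*$ because $\mathrm{Im}\,j_*$ is a triangulated subcategory closed under coproducts ($j_*$ being a left adjoint). The main obstacle I anticipate is this last step: cleanly identifying $\mathrm{Cone}(\theta_X)$ with the homotopy colimit of the filtration stages in $\mathbf{D}(\mathcal{A})$, and confirming the stability of $\mathrm{Im}\,j_*$ under such colimits; both reduce to the fact that $j_*$ is a left adjoint.
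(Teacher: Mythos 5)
Your route is, at its core, the paper's own argument transposed to the bimodule level. The paper reduces to the case $X=\mathcal{A}(x,-)$ (implicitly using that the free modules generate $\mathbf{D}(\mathcal{A})$ and that ${\rm Im}\,j_*$ is localizing) and then reads the cone off from the graded-split exact sequence
$$0\longrightarrow \mathcal{A}(x,-)\longrightarrow \mathcal{A}/\mathcal{B}(x,-)\longrightarrow \mathcal{A}/\mathcal{B}^{\geq 1}(x,-)\longrightarrow 0$$
in $Z^0(\mathcal{A}\mbox{-DGMod})$, filtering the third term by the pieces $\mathcal{A}/\mathcal{B}^{n}(x,-)$ --- which are exactly your $\bar F^{n}$ evaluated against a representable. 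Your globalization (resolve a general $X$ by $P$, tensor the graded-split bimodule sequence $0\to\mathcal{A}\to\mathcal{A}/\mathcal{B}\to(\mathcal{A}/\mathcal{B})/\mathcal{A}\to 0$ with $P$, then run the same filtration) is correct in outline: graded-splitness is indeed what makes the cone computable underived, the identification $\mathcal{A}(-,U_1)\otimes_{\mathcal{A}}P\cong P(U_1)$ is right, and the Milnor-colimit assembly along graded-split inclusions is standard. What you trade for skipping the reduction to representables is having to control $P$ itself, and that is where your write-up has a real soft spot.

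The step you state too casually is $\mathcal{A}(U_n,-)\otimes_k V\cong j_*(\mathcal{B}(U_n,-)\otimes_k V)$ for an \emph{arbitrary} complex $V$ of $k$-modules. Since $j_*={_1\mathcal{A}_j}\otimes^{\mathbb{L}}_{\mathcal{B}}-$ is a derived functor, the plain tensor computes it only when $\mathcal{B}(U_n,-)\otimes_k V$ is dg-projective (or at least homotopically flat) over $\mathcal{B}$; and you cannot repair this by resolving $V$, because checking that $\mathcal{A}(U_n,-)\otimes_k Q\to\mathcal{A}(U_n,-)\otimes_k V$ stays a quasi-isomorphism would require flatness of the complexes $\mathcal{A}(U_n,x')$, i.e.\ Hom \emph{out of} $\mathcal{B}$, which the standing hypothesis of Theorem \ref{thm:2} does not give. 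The hypothesis controls $\mathcal{A}(x,U)$, Hom \emph{into} $\mathcal{B}$, and this is precisely where it must be invoked (the paper's proof flags this with ``using the assumption''): the factors $\mathcal{A}(U_i,U_{i+1})$ of your $V$ are dg-projective over $k$ by hypothesis, and $P(U_1)$ is dg-projective over $k$ because $P$ is a retract of a module with an exhaustive graded-split filtration by sums of free modules $\mathcal{A}(x,-)$, whose evaluations $\mathcal{A}(x,U_1)$ are dg-projective over $k$ by hypothesis. Only then is $V$ dg-projective over $k$, hence $\mathcal{B}(U_n,-)\otimes_k V$ dg-projective over $\mathcal{B}$, and your isomorphism valid. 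One further small correction: closure of ${\rm Im}\,j_*$ under cones comes from the full faithfulness of $j_*$ (Proposition \ref{prop:rec}), not from its being a left adjoint, which only yields closure under coproducts.
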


 \begin{proof}
 The general case follows from the case where $X=\mathcal{A}(x, -)$. We observe ${\rm can}\pi_*(X)={\mathcal{A}/\mathcal{B}}(x, -)$. Then the statement follows from the following exact sequence
 in $Z^0(\mathcal{A}\mbox{-DGMod})$
 $$0\longrightarrow \mathcal{A}(x, -) \stackrel{\rm inc}\longrightarrow {\mathcal{A}/\mathcal{B}}(x, -) \longrightarrow {\mathcal{A}/\mathcal{B}}^{\geq 1}(x, -)\longrightarrow 0,$$
 where the rightmost term is generated by $\mathcal{A}(U, -)$'s, using the assumption.
 \end{proof}

 \begin{rem}
 In view of \cite[Theorem 4.11 (iii)]{Ke06}, there is an exact sequence
 $$\mathcal{B} \stackrel{j}\longrightarrow \mathcal{A} \stackrel{\pi}\longrightarrow \mathcal{A}/\mathcal{B}$$
 in $\mathbf{Hmo}$, the homotopy category of small dg categories with respect to the Morita-model structures. We mention that $\mathbf{Hmo}$ is equivalent to a full subcategory of $\mathbf{Hodgcat}$.
 \end{rem}

\subsection{An example}

We assume that $k$ is a field. Let $A$ be an ordinary $k$-algebra. Fix a set $\{P_i\; |\; i\in \Lambda\}$ of two-term complexes of finitely generated projective right $A$-modules. Denote by $\pi\colon A\rightarrow B$ the \emph{universal localization}.

The complexes $\{P_i\; |\; i\in \Lambda\}$ form a full dg subcategory $\mathcal{B}$ of $C^b_{\rm dg}(A^{\rm op}\mbox{-proj})$. Denote by $\Gamma$ the dg endomorphism algebra of $A_A$ in $C^b_{\rm dg}(A^{\rm op}\mbox{-proj})/\mathcal{B}$.

\begin{prop}
There is a recollement
\[\xymatrix{
\mathbf{D}(\Gamma) \ar[rr] &&  \mathbf{D}(A\mbox{-{\rm Mod}}) \ar[rr]  \ar@/_1pc/[ll] \ar@/^1pc/[ll] &&   \mathbf{D}(\mathcal{B}),   \ar@/_1pc/[ll]  \ar@/^1pc/[ll]
}\]
Consequently, $\Gamma$ is non-positively graded with $H^0(\Gamma)\simeq B$. Furthermore, $\pi$ is a homological epimorphism if and only if $\Gamma$ is quasi-isomorphic to $B$.
\end{prop}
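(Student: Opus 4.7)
The natural approach is to apply Theorem~\ref{thm:2} to the full dg subcategory inclusion $\mathcal{B}\subseteq \mathcal{A}:=C^b_{\rm dg}(A^{\rm op}\mbox{-proj})$. Since $k$ is a field, every complex of $k$-modules is dg-projective and the hypothesis of Theorem~\ref{thm:2} holds automatically, yielding a recollement
\[
\mathbf{D}(\mathcal{A}/\mathcal{B})\longrightarrow \mathbf{D}(\mathcal{A})\longrightarrow \mathbf{D}(\mathcal{B}).
\]
To rewrite the middle as $\mathbf{D}(A\mbox{-Mod})$, I would observe that $A_A\in \mathcal{A}$ has dg endomorphism algebra $A$ concentrated in degree zero, and that every bounded complex of finitely generated projective right $A$-modules lies in the thick closure of $A_A$ in $H^0(\mathcal{A})$. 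Thus $A_A$ is a compact generator of $\mathbf{D}(\mathcal{A})$, and Theorem~\ref{thm:1} yields $\mathbf{D}(\mathcal{A})\simeq \mathbf{D}(A\mbox{-Mod})$. The same thick-closure argument, transported through $\pi\colon\mathcal{A}\to\mathcal{A}/\mathcal{B}$, shows that $\pi(A_A)$ is a compact generator of $\mathbf{D}(\mathcal{A}/\mathcal{B})$ with dg endomorphism algebra $\Gamma$; hence $\mathbf{D}(\mathcal{A}/\mathcal{B})\simeq \mathbf{D}(\Gamma)$.

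For the non-positive grading of $\Gamma$, I would feed the bar-like decomposition of $\mathrm{Hom}_{\mathcal{A}/\mathcal{B}}(A,A)$ recalled earlier into a direct degree count. Writing each $P_i$ as a two-term complex concentrated in degrees $-1$ and $0$, the Hom-complexes $\mathcal{A}(U,A)$, $\mathcal{A}(A,U)$ and $\mathcal{A}(U,U')$ with $U,U'\in \mathcal{B}$ sit in degrees $\{0,1\}$, $\{-1,0\}$ and $\{-1,0,1\}$ respectively. In an $n$-fold bar summand the sum of the Hom degrees is then at most $n$, from which the $n$ inserted $\varepsilon$'s (each of degree $-1$) subtract $n$, giving overall degree at most zero.

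For the identification $H^0(\Gamma)\simeq B$, I would use that the dg quotient descends at $H^0$ to the Verdier quotient, so that $H^0(\mathcal{A}/\mathcal{B})\simeq H^0(\mathcal{A})/\mathrm{thick}(\mathcal{B})$ and $H^0(\Gamma)=\mathrm{End}(A)$ in this Verdier quotient. The identification of this endomorphism ring with the universal localization $B$ is a classical theorem on noncommutative localization due to Schofield, revisited by Neeman--Ranicki; this is the principal nontrivial input from outside the dg formalism and is the step I expect to be the main obstacle.

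Finally, for the homological-epimorphism equivalence, I would compare two fully faithful embeddings into $\mathbf{D}(A\mbox{-Mod})$: the embedding $\mathbf{D}(\Gamma)\hookrightarrow \mathbf{D}(A\mbox{-Mod})$ supplied by the recollement, whose essential image is the kernel of restriction to $\mathbf{D}(\mathcal{B})$, and the restriction-of-scalars $\mathbf{D}(B)\to \mathbf{D}(A\mbox{-Mod})$, which is fully faithful precisely when $\pi$ is a homological epimorphism. If $\Gamma$ is quasi-isomorphic to $B$, the first embedding factors through $\mathbf{D}(B)$, forcing $\pi$ to be a homological epimorphism. Conversely, once $\pi$ is a homological epimorphism, both embeddings land in the same subcategory (objects annihilated by $P_i\otimes^{\mathbb L}_A-$), and derived Morita theory applied to the compact generator corresponding to $A_A$ forces a quasi-isomorphism $\Gamma\simeq B$ of dg algebras.
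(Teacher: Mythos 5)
Your main line coincides with the paper's: apply Theorem~\ref{thm:2} to $\mathcal{B}\subseteq \mathcal{A}:=C^b_{\rm dg}(A^{\rm op}\mbox{-proj})$ (over a field the dg-projectivity hypothesis is indeed automatic, since every complex of vector spaces splits into discs and stalk complexes), and then identify the two outer terms of the resulting recollement. Where you differ is the identification tool. The paper invokes Lemma~\ref{lem:mor-equ}, applied to the one-object full dg subcategories $\{A_A\}\subseteq \mathcal{A}$ and $\{A_A\}\subseteq \mathcal{A}/\mathcal{B}$, whose endomorphism dg algebras are $A$ and $\Gamma$ on the nose, so that restriction immediately gives $\mathbf{D}(\mathcal{A})\simeq \mathbf{D}(A\mbox{-Mod})$ and $\mathbf{D}(\mathcal{A}/\mathcal{B})\simeq \mathbf{D}(\Gamma)$. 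Your compact-generator argument reaches the same identifications and is correct in substance, but note that Theorem~\ref{thm:1} as stated is an existence statement; what you actually use is Keller's derived Morita theorem behind its proof, and Lemma~\ref{lem:mor-equ} is precisely the paper's device for avoiding that appeal (your ``thick closure of $A_A$'' is the pretriangulated-hull condition of that lemma, up to direct summands). Your degree count for the non-positivity of $\Gamma$ is correct: with $P_i$ in degrees $\{-1,0\}$, an $n$-fold bar summand has Hom-degrees bounded by $1+(n-1)+0=n$, cancelled by the $n$ factors $\varepsilon$ of degree $-1$. This, and the appeal to Schofield/Neeman--Ranicki for $H^0(\Gamma)\simeq B$ via $H^0(\Gamma)={\rm End}(A)$ in the Verdier quotient, supplies more detail than the paper, which leaves its ``Consequently'' unproved.

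The one genuine soft spot is the converse direction of the final equivalence. You assert that once $\pi$ is a homological epimorphism, the essential image of $\mathbf{D}(B)\rightarrow \mathbf{D}(A\mbox{-Mod})$ equals ${\rm Ker}({\rm res})=\{M\;|\;P_i\otimes^{\mathbb{L}}_A M \mbox{ acyclic for all } i\}$. Only the inclusion ${\rm Im}\subseteq {\rm Ker}$ is formal (each $P_i\otimes_A B$ is contractible by the defining property of the universal localization); the reverse inclusion is essentially equivalent to what you are proving --- it is the content of the Neeman--Ranicki theorem that stable flatness of $B$ identifies $\mathbf{D}(B)$ with ${\rm Ker}({\rm res})$ --- so as written this step risks circularity. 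A route that stays inside the recollement: since $\Gamma$ is non-positively graded with $H^0(\Gamma)\cong B$, there is a canonical surjective dg algebra map $\Gamma\rightarrow B$, which is a quasi-isomorphism if and only if $H^{-p}(\Gamma)=0$ for all $p>0$; one then identifies these cohomologies with the Tor-groups measuring the failure of $B\otimes^{\mathbb{L}}_A B\rightarrow B$ to be an isomorphism (using the triangle of Lemma~\ref{lem:cone} for $X=\mathcal{A}(A_A,-)$), exactly as the paper's special case $H^{-p}(\Gamma)={\rm Tor}_{p-1}^{eAe}(Ae, eA)$ displays, and homological epimorphy is equivalent to the vanishing of these groups.
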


\begin{proof}
In view of Theorem \ref{thm:2}, it suffices to  identify $\mathbf{D}(A)=\mathbf{D}(A\mbox{-{\rm Mod}})$ with $\mathbf{D}(C^b_{\rm dg}(A^{\rm op}\mbox{-proj}))$, $\mathbf{D}(\Gamma)$ with $\mathbf{D}(C^b_{\rm dg}(A^{\rm op}\mbox{-proj})/\mathcal{B})$; see Lemma \ref{lem:mor-equ}.
\end{proof}

Let us consider a special case, where $\mathcal{B}$ is given by a single stalk complex $eA$  for some idempotent $e$ in $A$. Then the dg algebra $\Gamma$ is isomorphic to the tensor algebra
$$T_A(Ae\otimes k\varepsilon\otimes eA),$$ with $|\varepsilon|=-1$ and $d(e\otimes \varepsilon\otimes e)=e$. We observe that $H^0(\Gamma)=A/AeA$, $H^{-1}(\Gamma)={\rm Ker}(Ae\otimes_{eAe} eA\rightarrow A)$, and $H^{-p}(\Gamma)={\rm Tor}_{p-1}^{eAe}(Ae, eA)$ for $p\geq 2$. Then we recover the classical result: the ideal $AeA$ is stratifying if and only if the natural projection $A\rightarrow A/AeA$ is a homological epimorphism.

\section{Exact dg categories}

In this section, we justify the following fact: dg categories have functorial cones. Indeed,  an exact dg category has intrinsic suspensions and cones.

\subsection{The suspensions and cones in general}

The following notions are taken from \cite[Section 3]{BLL}.

\begin{defn}
For $x\in {\rm obj}(\mathcal{A})$, its \emph{suspension} means an object $x'$ together with a closed isomorphism $\xi_x\colon x'\rightarrow x$ of degree one. Notation: $x'=\Sigma(x)$, as it is unique up to a unique dg isomorphism.
\end{defn}

\begin{lem}\label{lem:sus}
An object $y$ is dg isomorphic to $\Sigma(x)$ if and only if $\mathcal{A}(-, y)$ is dg isomorphic to $\Sigma \mathcal{A}(-, x)$, as a right dg $\mathcal{A}$-module.
\end{lem}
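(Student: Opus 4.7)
The plan is to deduce the lemma from the fully faithful Yoneda dg embedding
$$\mathcal{A} \longrightarrow \mathrm{DGMod}\mbox{-}\mathcal{A}, \quad y \mapsto \mathcal{A}(-, y),$$
together with the tautological isomorphism of Hom-complexes
$$\mathrm{Hom}_\mathcal{A}(N, \Sigma N') \xrightarrow{\sim} \Sigma\mathrm{Hom}_\mathcal{A}(N, N')$$
for arbitrary right dg $\mathcal{A}$-modules $N, N'$. Chaining the two yields a canonical isomorphism of complexes
$$\mathrm{Hom}_\mathcal{A}(\mathcal{A}(-, y), \Sigma\mathcal{A}(-, x)) \xrightarrow{\sim} \Sigma\mathcal{A}(y, x),$$
under which a closed degree-zero module morphism $\phi$ on the left corresponds to a closed degree-one element $\xi \in \mathcal{A}(y, x)$ on the right; explicitly, $\phi$ is given by post-composition with $\xi$, up to the sign dictated by the suspension conventions.

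Next, I would upgrade this bijection between closed degree-zero module morphisms and closed degree-one morphisms to a bijection between isomorphisms. For the ``only if'' direction, given a closed degree-one isomorphism $\xi \colon y \to x$, its two-sided inverse $\eta \in \mathcal{A}(x, y)^{-1}$ is automatically closed (apply $d$ to $\xi \circ \eta = 1_x$ and use $d(\xi) = 0$, $d(1_x) = 0$), so post-composition with $\eta$ furnishes a two-sided dg inverse to $\phi$. For the converse direction, if $\phi$ is a dg isomorphism with inverse $\psi$, then by the same Yoneda-plus-suspension dictionary $\psi$ corresponds to a closed element $\eta \in \mathcal{A}(x, y)^{-1}$; the relations $\psi\phi = \mathrm{id}$ and $\phi\psi = \mathrm{id}$ translate, via the fullness of the Yoneda embedding applied to these identities evaluated on $1_y$ and $1_x$, into $\eta \circ \xi = 1_y$ and $\xi \circ \eta = 1_x$, exhibiting $\xi$ as a closed isomorphism of degree one.

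The main obstacle is sign bookkeeping: one has to verify explicitly that post-composition with a closed degree-one $\xi$ is a chain map into the shifted module, i.e.\ intertwines $d_\mathcal{A}$ on $\mathcal{A}(-, y)$ with $-d_\mathcal{A}$ on $\Sigma\mathcal{A}(-, x)$, and that the inverse $\eta$ translates through Yoneda without a spurious Koszul sign upsetting the identities $\eta\xi = 1_y$ and $\xi\eta = 1_x$. Both checks reduce to a single application of the Leibniz rule $d(\xi \circ f) = d(\xi) \circ f + (-1)^{|\xi|} \xi \circ d(f)$ with $d(\xi) = 0$ and $|\xi| = 1$, so once the conventions are aligned the argument is mechanical.
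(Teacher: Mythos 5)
Your proposal is correct and takes essentially the same route as the paper: the paper's (two-line) proof likewise reinterprets a dg isomorphism $\mathcal{A}(-,y)\simeq \Sigma\mathcal{A}(-,x)$ as a closed degree-one isomorphism $\mathcal{A}(-,y)\simeq \mathcal{A}(-,x)$ and transfers it through the fully faithful Yoneda embedding to a closed degree-one isomorphism $y\to x$, which is exactly the definition of $y$ being a suspension of $x$. The only difference is that you make explicit the bookkeeping the paper leaves implicit, e.g.\ the identification $\mathrm{Hom}_\mathcal{A}(N,\Sigma N')\simeq \Sigma\mathrm{Hom}_\mathcal{A}(N,N')$ and the Leibniz-rule check that the inverse of a closed isomorphism is automatically closed.
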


\begin{proof}
$\mathcal{A}(-, y)\simeq \Sigma \mathcal{A}(-, x)$ if and only if there is a closed isomorphism $\mathcal{A}(-, y)\simeq \mathcal{A}(-, x)$ of degree one. Now, use the Yoneda embedding $\mathcal{A}\hookrightarrow {\rm DGMod}\mbox{-}\mathcal{A}$.
 \end{proof}

Assume that each object has a suspension. Then we have a fully faithful dg functor
$$\Sigma\colon \mathcal{A}\longrightarrow \mathcal{A}, \quad x\mapsto \Sigma(x).$$
For a morphism $a\colon x\rightarrow y$, we have $\Sigma(a)=(-1)^{|a|} (\xi_y)^{-1}\circ a \circ \xi_x$. We call $\Sigma$ the \emph{suspension dg functor}.

Let $f\colon x\rightarrow y$ be a morphism in $Z^0(\mathcal{A})$. Assume that $\xi_x\colon \Sigma(x)\rightarrow x$ is the chosen isomorphism.

\begin{defn}
The \emph{cone} of $f$ is defined to be an object ${\rm Cone}(f)$ together with a diagram
\[
\xymatrix{ y \ar[rr]^-{j} && {\rm Cone}(f) \ar@/^1pc/@{.>}[ll]^-{t} \ar[rr]^-{p} &&  \Sigma(x) \ar@/^1pc/@{.>}[ll]^-{s} }\]
such that
\begin{enumerate}
\item[(C1)] the diagram is a biproduct in $\mathcal{A}^0$, the underlying category with morphisms of degree zero;
\item[(C2)] $d(j)=0=d(p)$;
\item[(C3)] $f=t\circ d(s)\circ \xi_x^{-1}=-d(t)\circ s\circ \xi_x^{-1}$.
\end{enumerate}
\end{defn}

\begin{rem}
\begin{enumerate}
\item Assume (C1) and (C2). A nice exercise: prove that  (C3) is equivalent to any of the following identities: $d(s)=j\circ f\circ \xi_x$, $d(t)=-f\circ \xi_x\circ p$.
\item The cone is unique up to a unique dg isomorphism. We refer to \cite[Subsection~2.4, the second paragraph]{Ke11} for a characterization of ${\rm Cone}(f)$ using a pair of maps with a universal property.
\end{enumerate}
\end{rem}

\begin{lem}\label{lem:inter-cone}
An object $z$ is dg isomorphic to ${\rm Cone}(f)$ if and only if $\mathcal{A}(-, z)$ is dg isomorphic to ${\rm Cone}(\mathcal{A}(-, f)\colon \mathcal{A}(-, x)\rightarrow \mathcal{A}(-, y))$.
\end{lem}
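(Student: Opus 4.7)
The plan is to reduce the object-level cone structure to a statement about right dg modules via the Yoneda dg functor $h\colon \mathcal{A}\to {\rm DGMod}\mbox{-}\mathcal{A}$, $z\mapsto \mathcal{A}(-,z)$, in parallel with the proof of Lemma \ref{lem:sus}. Two ingredients do the work: the Yoneda dg functor is fully faithful on Hom-complexes (so it preserves and reflects degrees, differentials, compositions, and dg isomorphisms), and Lemma \ref{lem:sus} supplies a dg isomorphism $\mathcal{A}(-,\Sigma(x))\simeq \Sigma\mathcal{A}(-,x)$ of right dg $\mathcal{A}$-modules.

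For the forward direction, assume $z\simeq {\rm Cone}(f)$ with biproduct data $(j,p,t,s)$ satisfying (C1)--(C3). Applying $h$ termwise yields a diagram in ${\rm DGMod}\mbox{-}\mathcal{A}$; since $h$ is an additive dg functor, (C1) gives a biproduct in the underlying graded module category, (C2) gives closedness of $h(j)$ and $h(p)$, and (C3) becomes $\mathcal{A}(-,f)=h(t)\circ d(h(s))\circ h(\xi_x)^{-1}$. Post-composing the $\Sigma(x)$-leg with the isomorphism of Lemma \ref{lem:sus} matches this data to the biproduct description (\ref{equ:cone}) of the mapping cone in ${\rm DGMod}\mbox{-}\mathcal{A}$, whence $\mathcal{A}(-,z)\simeq {\rm Cone}(\mathcal{A}(-,f))$.

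Conversely, suppose $\mathcal{A}(-,z)\simeq {\rm Cone}(\mathcal{A}(-,f))$, and transport (\ref{equ:cone}) through this isomorphism. Identifying $\Sigma\mathcal{A}(-,x)$ with $\mathcal{A}(-,\Sigma(x))$ via Lemma \ref{lem:sus}, every arrow of the transported biproduct runs between representable right modules; full faithfulness of $h$ lifts each one uniquely to a morphism $j,p,t,s$ in $\mathcal{A}$ of the required degree. Because $h$ preserves and reflects compositions, degrees, closedness, and off-diagonal differentials, the conditions (C1)--(C3) for $(z;j,p,t,s)$ to present $z$ as ${\rm Cone}(f)$ hold if and only if their $h$-images hold; the latter hold by construction, so $z\simeq {\rm Cone}(f)$.

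The only delicate step I anticipate is the bookkeeping around Lemma \ref{lem:sus}: one must verify that the ``$\Sigma$'' used on the module side (the formal shift of complexes) is compatible under $h$ with the intrinsic suspension of objects in $\mathcal{A}$, so that the leg $p$ really descends to an honest morphism $z\to \Sigma(x)$ in $\mathcal{A}$ rather than to an abstract map into a shifted representable. Beyond this, the argument is a routine translation between the abstract cone conditions (C1)--(C3) and the concrete mapping cone (\ref{equ:cone}) via Yoneda.
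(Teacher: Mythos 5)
Your proposal is correct and takes essentially the same route as the paper, whose proof is exactly the one-line recipe you expanded: transport the biproduct data (C1)--(C3) along the fully faithful Yoneda dg functor and match it against the mapping-cone biproduct (\ref{equ:cone}), using Lemma \ref{lem:sus} to identify $\mathcal{A}(-,\Sigma(x))$ with $\Sigma\mathcal{A}(-,x)$. The ``delicate step'' you flag is settled precisely by the closed degree-one isomorphism $\xi_x$ furnishing that identification, so no extra argument is needed.
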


\begin{proof}
We use (\ref{equ:cone}) and the Yoneda embedding $\mathcal{A}\hookrightarrow {\rm DGMod}\mbox{-}\mathcal{A}$.
\end{proof}

As promised, we prove that cones are functorial in a dg category.

\begin{prop}
Assume that each object in $\mathcal{A}$ has a suspension and each closed morphism of degree zero has a cone. Then we have the cone dg  functor
$${\rm Cone}\colon {mor}(\mathcal{A})\longrightarrow \mathcal{A}, \quad (x, y; f)\mapsto {\rm Cone}(f).$$
It sends a morphism $\begin{pmatrix}\alpha & 0\\
                                   h & \beta\end{pmatrix}\colon (x, y; f)\rightarrow (x', y'; f')$ to $$\begin{pmatrix} \beta & h\circ \xi_x\\
                                                   0 & (-1)^{|\alpha|} \Sigma(\alpha)\end{pmatrix}\colon {\rm Cone}(f)\longrightarrow {\rm Cone}(f'),$$ where we identify ${\rm Cone}(f)$ with $y\oplus \Sigma(x)$,  ${\rm Cone}(f')$ with $y'\oplus \Sigma(x')$.
\end{prop}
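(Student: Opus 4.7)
The plan is to reduce the claim to the analogous statement for dg modules, where cones are genuine graded direct sums and matrix arithmetic is transparent, and then transport the construction back to $\mathcal{A}$ via the Yoneda embedding $Y\colon \mathcal{A}\hookrightarrow {\rm DGMod}\mbox{-}\mathcal{A}$, $x\mapsto \mathcal{A}(-,x)$, which is fully faithful and dg. By Lemma \ref{lem:inter-cone}, for each object $(x,y;f)$ of $\mathrm{mor}(\mathcal{A})$ the module $Y({\rm Cone}(f))$ is dg isomorphic to the module-theoretic cone of $Y(f)=\mathcal{A}(-,f)$, and the biproduct of (C1) in $\mathcal{A}^0$ corresponds under $Y$ to the module-level biproduct of (\ref{equ:cone}). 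So $Y$ intertwines the proposed object assignment $(x,y;f)\mapsto {\rm Cone}(f)$ with the familiar cone construction in the module category.

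Inside ${\rm DGMod}\mbox{-}\mathcal{A}$, where ${\rm Cone}(F\colon M\to N)=N\oplus \Sigma(M)$ as a graded module with the mixed differential, I would define the cone on a morphism $\phi=\begin{pmatrix}\alpha & 0\\ h & \beta\end{pmatrix}$ in $\mathrm{mor}({\rm DGMod}\mbox{-}\mathcal{A})$ by the matrix
$$\begin{pmatrix}\beta & h\circ\xi_M\\ 0 & (-1)^{|\alpha|}\Sigma(\alpha)\end{pmatrix}\colon N\oplus \Sigma(M)\longrightarrow N'\oplus\Sigma(M'),$$
and then verify directly that this is a dg functor. Two matrix computations do the job: preservation of composition, where the Koszul sign $(-1)^{|\alpha|\cdot|\alpha'|}$ in the bottom-right entry combines correctly with the signs on $h\circ\xi_M$ to give the expected formula for the composite; and the Leibniz rule $d({\rm Cone}(\phi))={\rm Cone}(d\phi)$, where the $-d_\mathcal{A}(\alpha)$ entry and the $-(-1)^p\beta\circ a$ entry of the differential on $\mathrm{mor}(\mathcal{A})$ match exactly the signs coming from the cone differential and from the suspension.

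To finish, I transport back via $Y$. Since cones of morphisms whose sources and targets lie in $Y(\mathcal{A})$ automatically lie in $Y(\mathcal{A})$ by our standing assumption, and since $Y$ is fully faithful, the module-level cone dg functor descends to a dg functor ${\rm Cone}\colon \mathrm{mor}(\mathcal{A})\to \mathcal{A}$. Unwinding via the biproduct ${\rm Cone}(f)\cong y\oplus \Sigma(x)$ in $\mathcal{A}^0$ reproduces the stated matrix. The main obstacle, as always in the dg setting, is sign-tracking: three Koszul conventions must mesh, namely the differential on $\mathrm{mor}(\mathcal{A})$, the definition $\Sigma(a)=(-1)^{|a|}\xi_{y}^{-1}\circ a\circ\xi_x$ of the suspension dg functor, and the characterizing relation $d(s)=j\circ f\circ\xi_x$ of the cone (condition (C3)). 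Once these align, the verification is mechanical matrix arithmetic.
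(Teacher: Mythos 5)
Your proposal is correct, but it takes a genuinely different route from the paper. The paper's proof is a single intrinsic remark: the matrix $\begin{pmatrix} \beta & h\circ \xi_x\\ 0 & (-1)^{|\alpha|}\Sigma(\alpha)\end{pmatrix}$ is declared to be shorthand for the morphism $(-1)^{|\alpha|}\, s'\circ \Sigma(\alpha)\circ p \,+\, j'\circ \beta\circ t \,+\, j'\circ h\circ \xi_x\circ p$ built from the structure maps $j', t, p, s'$ of the two cone diagrams, and the dg-functoriality checks (using (C2), (C3) and the identities $d(s)=j\circ f\circ \xi_x$, $d(t)=-f\circ \xi_x\circ p$) are left implicit, entirely inside $\mathcal{A}$, with no detour through modules. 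You instead externalize to ${\rm DGMod}\mbox{-}\mathcal{A}$, where ${\rm Cone}$ is an honest graded direct sum so that the Leibniz rule and composition become literal matrix arithmetic, and you transport back along the Yoneda embedding via Lemmas \ref{lem:sus} and \ref{lem:inter-cone} — which is exactly the technique the paper uses to prove those two lemmas, so your argument is a legitimate and more verifiable expansion: it buys transparent sign bookkeeping at the cost of a transport step. Two points to tighten. First, ${\rm Cone}(Y(f))$ does not lie in $Y(\mathcal{A})$ on the nose; it is only dg isomorphic to $Y({\rm Cone}(f))$, so ``descends'' must mean conjugation by a chosen closed degree-zero isomorphism $u_{(x,y;f)}\colon Y({\rm Cone}(f))\to {\rm Cone}(Y(f))$, and both the functoriality of the conjugated assignment and the claim that unwinding reproduces the stated matrix rely on $u_{(x,y;f)}$ being canonical, i.e., the unique isomorphism compatible with the biproduct data (C1)--(C3) — this is where the uniqueness of the cone up to a unique dg isomorphism enters, and it deserves a sentence. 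Second, your aside about a Koszul sign $(-1)^{|\alpha|\cdot|\alpha'|}$ in the composition check is off: since $\Sigma$ is a genuine dg functor, $\Sigma(\alpha'\circ\alpha)=\Sigma(\alpha')\circ\Sigma(\alpha)$ with no cross-term, and the bottom-right entries compose as $(-1)^{|\alpha'|}\Sigma(\alpha')\circ(-1)^{|\alpha|}\Sigma(\alpha)=(-1)^{|\alpha|+|\alpha'|}\Sigma(\alpha'\circ\alpha)$ — the sign is additive in the degrees, not multiplicative; the verification you outline still closes, but with that correction.
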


\begin{proof}
Just remember that $\begin{pmatrix} \beta & h\circ \xi_x\\
                                                   0 & (-1)^{|\alpha|} \Sigma(\alpha)\end{pmatrix}$ really means $(-1)^{|\alpha|} s'\circ \Sigma(\alpha)\circ p + j'\circ \beta \circ t + j'\circ h\circ \xi_x\circ p$.
\end{proof}

\begin{rem}
For a closed morphism $\begin{pmatrix}\alpha & 0\\
                                   h & \beta\end{pmatrix}$ of degree zero, we obtain a commutative diagram in $Z^0(\mathcal{A})$ as follows:
                                   \[\xymatrix{
                                   y\ar[rr]^-{j}\ar[d]_-{\beta} && {\rm Cone}(f) \ar[d]^-{ \tiny \begin{pmatrix} \beta & h\circ \xi_x\\
                                                   0 &  \Sigma(\alpha)\end{pmatrix}} \ar[rr]^-{p} && \Sigma(x) \ar[d]^-{\Sigma(\alpha)}\\
                                   y'\ar[rr]^-{j'} && {\rm Cone}(f) \ar[rr]^-{p'} && \Sigma(x).
                                   }\]
\end{rem}

\subsection{The definition}

The following notion, due to \cite[Section 2]{Ke99}, should be central.

\begin{defn}
A dg category $\mathcal{A}$ is \emph{exact} provided that
\begin{enumerate}
\item each object has a suspension, and $\Sigma\colon \mathcal{A}\rightarrow \mathcal{A}$ is dg dense;
\item each closed morphism of degree zero has a cone.
\end{enumerate}
\end{defn}

Exact dg categories are also called \emph{strongly pretriangulated} dg categories; see Remark~\ref{rem:confu} and compare Definition~\ref{defn:pret}.

\begin{lem}
The dg category $\mathcal{A}$ is exact if and only if the essential image of the Yoneda embedding $\mathcal{A}\hookrightarrow {\rm DGMod}\mbox{-}\mathcal{A}$ is closed under $\Sigma^{\pm 1}$ and cones.
\end{lem}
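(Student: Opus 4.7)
The plan is to derive both directions of the equivalence directly from Lemmas \ref{lem:sus} and \ref{lem:inter-cone}, which already translate the existence of suspensions and cones inside $\mathcal{A}$ into isomorphism statements about representable modules. Since the Yoneda embedding $\mathcal{A}\hookrightarrow {\rm DGMod}\mbox{-}\mathcal{A}$ is fully faithful, every closed morphism of degree zero between representables is of the form $\mathcal{A}(-,f)$ for a unique $f$ in $Z^0(\mathcal{A})$; this lets me pass freely between closed degree-zero maps in $\mathcal{A}$ and closed degree-zero maps in the essential image.

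First I would handle condition (1) of exactness. Closure of the essential image under $\Sigma$ says: for every $x$, the module $\Sigma\mathcal{A}(-,x)$ is dg isomorphic to $\mathcal{A}(-,y)$ for some $y\in \mathcal{A}$, and by Lemma \ref{lem:sus} this $y$ is a suspension of $x$. Closure under $\Sigma^{-1}$ says: for every $x$ there is $z\in \mathcal{A}$ with $\mathcal{A}(-,z)\simeq \Sigma^{-1}\mathcal{A}(-,x)$, equivalently $\mathcal{A}(-,x)\simeq \Sigma\mathcal{A}(-,z)$, which by Lemma \ref{lem:sus} means $x$ is dg isomorphic to $\Sigma(z)$, i.e.\ the suspension dg functor $\Sigma\colon\mathcal{A}\to\mathcal{A}$ (which exists once suspensions exist) is dg dense. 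The reverse implication just reads the same lemma in the opposite direction.

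Next I would handle condition (2). Given a closed morphism $f\colon x\to y$ of degree zero in $\mathcal{A}$, the morphism $\mathcal{A}(-,f)$ admits a cone in ${\rm DGMod}\mbox{-}\mathcal{A}$ via the construction of Subsection 3.2. By Lemma \ref{lem:inter-cone}, this cone is dg isomorphic to $\mathcal{A}(-,z)$ for some $z\in \mathcal{A}$ precisely when $f$ has a cone in $\mathcal{A}$. Combined with the Yoneda dictionary for closed degree-zero morphisms, closure of the essential image under cones is equivalent to the existence of a cone in $\mathcal{A}$ for every closed degree-zero morphism, which is condition (2).

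The only piece of bookkeeping worth flagging is the distinction between closure under $\Sigma$ alone and closure under $\Sigma^{\pm 1}$: the former gives only existence of suspensions, while dg density of $\Sigma\colon\mathcal{A}\to\mathcal{A}$ requires every object to lie in the essential image of $\Sigma$, which is exactly what closure of the Yoneda image under $\Sigma^{-1}$ encodes. Beyond this, the proof is a direct two-line application of the two preceding lemmas, so I do not foresee any real obstacle.
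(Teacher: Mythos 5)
Your proof is correct and follows exactly the paper's route: the paper's entire proof is ``We use Lemmas \ref{lem:sus} and \ref{lem:inter-cone}'', and your write-up simply fills in the details that argument presupposes. In particular, your explicit observation that closure under $\Sigma^{-1}$ is what encodes dg density of the suspension functor $\Sigma\colon\mathcal{A}\rightarrow\mathcal{A}$ (while closure under $\Sigma$ alone only gives existence of suspensions) is precisely the bookkeeping the paper leaves implicit, and you handle it correctly.
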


\begin{proof}
We use Lemmas \ref{lem:sus} and \ref{lem:inter-cone}.
\end{proof}

The following fact justifies the importance of exact dg categories; see \cite[Lemma~2.3]{Ke99}.

\begin{prop}
Let $\mathcal{A}$ be an exact dg category. Then $Z^0(\mathcal{A})$ has a canonical Frobenius exact structure, whose stable category coincides with $H^0(\mathcal{A})$. Therefore, $H^0(\mathcal{A})$ is canonically triangulated.

Furthermore, let $F\colon \mathcal{A}\rightarrow \mathcal{B}$ be a dg functor between two exact dg categories. Then $Z^0(F)\colon Z^0(\mathcal{A})\rightarrow Z^0(\mathcal{B})$ is an exact functor preserving projective objects. Consequently, $H^0(F)\colon H^0(\mathcal{A}) \rightarrow H^0(\mathcal{B})$ is naturally a triangle functor.
\end{prop}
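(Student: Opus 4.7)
The plan is to transport the Frobenius exact structure already established on $Z^0(\mathrm{DGMod}\text{-}\mathcal{A})$ down to $Z^0(\mathcal{A})$ via the Yoneda embedding $Y\colon \mathcal{A}\hookrightarrow \mathrm{DGMod}\text{-}\mathcal{A}$, $x\mapsto \mathcal{A}(-,x)$. This dg embedding is fully faithful, and by the lemma immediately preceding the proposition its essential image is closed under $\Sigma^{\pm 1}$ and under cones in the ambient dg module category.

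First I would declare a sequence in $Z^0(\mathcal{A})$ to be a conflation whenever its image under $Y$ is a graded-split short exact sequence. To justify that this defines an exact structure I would show that every graded-split short exact sequence in $Z^0(\mathrm{DGMod}\text{-}\mathcal{A})$ is isomorphic to a canonical cone triangle $0\to N\to \mathrm{Cone}(g)\to \Sigma(M)\to 0$ arising from a closed morphism $g$ of degree zero; this is read off from the shape of the twisted differential on the graded biproduct $N\oplus M$. Combined with the closure of $Y(\mathcal{A})$ under cones and $\Sigma^{\pm 1}$, and with Lemma~\ref{lem:inter-cone}, this shows that any conflation with outer terms in $Y(\mathcal{A})$ has its middle term there, and that each closed morphism of degree zero in $\mathcal{A}$ produces such a conflation via its intrinsic cone.

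For the Frobenius property I would identify the projective--injectives: $Y(x)$ is contractible in $Z^0(\mathrm{DGMod}\text{-}\mathcal{A})$ if and only if $1_x$ is a boundary in the Hom-complex $\mathcal{A}(x,x)$, i.e.\ $x$ is zero in $H^0(\mathcal{A})$. Enough projective--injectives arise from cones of identities: for each $x$, the object $\mathrm{Cone}(1_x)$ is contractible and fits into an inflation $x\to \mathrm{Cone}(1_x)\to \Sigma(x)$, while $\Sigma^{-1}(x)\to \mathrm{Cone}(1_{\Sigma^{-1}(x)})\to x$ supplies a deflation onto $x$. A brief unwinding then shows that two closed morphisms $f,g\colon x\to y$ of degree zero differ by a morphism factoring through a contractible object if and only if $f-g=d(h)$ for some $h\in \mathcal{A}(x,y)^{-1}$; the only delicate direction uses the factorization through $\mathrm{Cone}(1_x)$ to extract $h$. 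This identifies the stable category of $Z^0(\mathcal{A})$ with $H^0(\mathcal{A})$, which thereby carries the canonical triangulated structure of the stable category of a Frobenius exact category.

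For the second statement, a dg functor $F\colon \mathcal{A}\to \mathcal{B}$ is additive and commutes with degrees and differentials, hence preserves biproducts of degree zero, sends closed isomorphisms of degree one to closed isomorphisms of degree one (so it intertwines suspensions up to a canonical dg isomorphism $F\Sigma_{\mathcal{A}}\cong \Sigma_{\mathcal{B}}F$), and preserves the conditions (C1)--(C3) defining cones. Therefore $Z^0(F)$ maps cone conflations to cone conflations and contractible objects to contractible objects; this makes it an exact functor preserving projective--injectives, and the induced functor on stable categories $H^0(F)$ is naturally a triangle functor. The main obstacle throughout is the bookkeeping in the conflation analysis --- matching graded-split short exact sequences in the module category with cone sequences coming from $\mathcal{A}$ and verifying the homotopy relation produced by factorization through $\mathrm{Cone}(1_x)$; everything else follows cleanly from Yoneda together with the exactness hypotheses on $\mathcal{A}$ and $\mathcal{B}$.
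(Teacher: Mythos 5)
Your proposal is correct and follows essentially the same route as the paper's (much terser) proof: embed $Z^0(\mathcal{A})$ via the Yoneda dg functor as a full, extension-closed subcategory of the Frobenius category $Z^0({\rm DGMod}\mbox{-}\mathcal{A})$ with its graded-split exact structure, identify the projective--injective objects with the contractible ones via cones of identities, and deduce the statement on $F$ from the fact that suspensions, cones and graded-split biproducts are intrinsic and hence preserved by any dg functor. The only cosmetic difference is that you triangulate $H^0(\mathcal{A})$ by Happel's theorem on stable categories of Frobenius categories, whereas the paper instead observes that $H^0(\mathcal{A})\subseteq \mathbf{perf}(\mathcal{A}^{\rm op})$ is a triangulated subcategory; the two triangulations agree.
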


\begin{proof}
We observe $Z^0(\mathcal{A})$ is a full subcategory of $Z^0({\rm DGMod}\mbox{-}\mathcal{A})$. Each  conflation in $Z^0(\mathcal{A})$ is given by a sequence $x\rightarrow e \rightarrow y$, which is a part of a biproduct in $\mathcal{A}^0$. We observe that $H^0(\mathcal{A})\subseteq \mathbf{perf}(\mathcal{A}^{\rm op})$ is a triangulated subcategory.

The second statement follows immediately, since everything is intrinsic.
\end{proof}

\begin{defn}
The \emph{pretriangulated  hull} $\mathcal{A}^{\rm pretr}$ is the smallest dg subcategory of ${\rm DGMod}\mbox{-}\mathcal{A}$ containing $\mathcal{A}$, closed under $\Sigma^{\pm}$ and cones. As $\mathcal{A}^{\rm pretr}$ is exact, we define the \emph{triangulated hull}  of $\mathcal{A}$ to be $\mathcal{A}^{\rm tr}=H^0(\mathcal{A}^{\rm pretr})$.
\end{defn}

\begin{rem}\label{rem:tr}
\begin{enumerate}
\item The dg category $\mathcal{A}$ is exact if and only if the canonical embedding ${\rm can}_\mathcal{A} \colon \mathcal{A}\rightarrow \mathcal{A}^{\rm pretr}$ is a dg equivalence.
    \item The inclusion $\mathcal{A}^{\rm tr}\subseteq \mathbf{perf}(\mathcal{A}^{\rm op})$ is a triangle equivalence up to direct summands.
    \item There is a concrete description of the pretriangulated hull; see \cite{BK} and \cite[Subsection 2.4]{Dri}.
    \end{enumerate}
\end{rem}

\begin{lem}
The canonical embedding ${\rm can}_\mathcal{A}\colon \mathcal{A}\rightarrow \mathcal{A}^{\rm pretr}$ has the following universal property: for any dg functor $F\colon \mathcal{A}\rightarrow \mathcal{B}$ into an exact dg category $\mathcal{B}$, there is a dg functor $\tilde{F}\colon \mathcal{A}^{\rm pretr}\rightarrow \mathcal{B}$ with a natural dg isomorphism $\theta\colon F\simeq \tilde{F}{\rm can}_\mathcal{A}$; moreover, the pair $(\tilde{F}, \theta)$ is unique in the obvious sense.

The dg functor $\tilde{F}$ is a dg equivalence if and only if $F$ is fully faithful and ${\rm Im}\; F$ generates $\mathcal{B}$ as an exact dg category.
\end{lem}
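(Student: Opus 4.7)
The plan is to construct $\tilde F$ by induction along the natural filtration $\mathcal A = \mathcal A^{(0)} \subseteq \mathcal A^{(1)} \subseteq \cdots$, where $\mathcal A^{(n+1)}$ is obtained from $\mathcal A^{(n)}$ by adjoining a chosen suspension, desuspension and cone for each object and each closed degree-zero morphism, so that $\mathcal A^{\rm pretr} = \bigcup_n \mathcal A^{(n)}$. Set $\tilde F|_{\mathcal A} = F$ with $\theta|_{\mathcal A} = {\rm id}_F$; at the inductive step, use exactness of $\mathcal B$ to \emph{choose} a suspension and a cone of the corresponding data in $\mathcal B$, and declare $\tilde F$ to send the newly adjoined objects to these choices. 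The point is that suspensions and cones in $\mathcal B$ come equipped with canonical biproduct data (the maps $j,p,s,t$ satisfying (C1)--(C3)), and any morphism in $\mathcal A^{(n+1)}$ between the new objects is given by a matrix in this biproduct; applying $\tilde F$ entrywise defines the extension on morphisms, and the dg conditions are preserved because the differential of such a matrix is a universal expression in the biproduct data and the entries.

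Uniqueness of $(\tilde F, \theta)$ follows from the fact that $\Sigma(x)$ and ${\rm Cone}(f)$ are unique up to a unique dg isomorphism: any two extensions differ at each stage by a coherent system of such uniquely determined isomorphisms, which assembles into a unique dg natural isomorphism. For the characterization of when $\tilde F$ is a dg equivalence, the ``only if'' direction is clear, since a dg equivalence is fully faithful, composing with the fully faithful canonical embedding ${\rm can}_{\mathcal A}$ makes $F$ fully faithful, and ${\rm Im}\, F$ generates $\mathcal B$ because its preimage $\mathcal A$ generates $\mathcal A^{\rm pretr}$ by the very definition of the pretriangulated hull. For the ``if'' direction I would verify dg density and full faithfulness separately: the full dg subcategory of $\mathcal B$ consisting of objects dg isomorphic to some $\tilde F(z)$ contains ${\rm Im}\, F$ and is closed under $\Sigma^{\pm 1}$ and cones (since $\tilde F$ preserves them up to dg isomorphism), so it exhausts $\mathcal B$; and full faithfulness is inherited inductively, because the Hom-complex into or out of a cone decomposes into a matrix of Hom-complexes between constituents via (C1), so the base case (the hypothesis on $F$) propagates through the filtration.

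The main obstacle is the bookkeeping that makes $\tilde F$ an honest dg functor, in particular that its action on a composite of matrix morphisms is the matrix of composites and that the differential is preserved. This becomes transparent after observing, via Lemmas \ref{lem:sus} and \ref{lem:inter-cone}, that the biproduct data on $\tilde F({\rm Cone}(f))$ is identified with the images of $j,p,s,t$ under the unique dg isomorphism with any chosen cone of $\tilde F(f)$ in $\mathcal B$; the entire verification then reduces to universal identities in ${\rm DGMod}\mbox{-}\mathcal B$ pulled back through the Yoneda embedding.
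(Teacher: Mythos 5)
Your proposal is correct in outline, but it takes a genuinely different route from the paper, whose entire proof is the one-line formula $\tilde{F}=({\rm can}_\mathcal{B})^{-1}F^{\rm pretr}$: the pretriangulated hull is functorial (induction along $F$ between dg module categories preserves representables, $\Sigma^{\pm 1}$ and cones, hence restricts to $F^{\rm pretr}\colon \mathcal{A}^{\rm pretr}\rightarrow \mathcal{B}^{\rm pretr}$), and since $\mathcal{B}$ is exact, ${\rm can}_\mathcal{B}$ is a dg equivalence by Remark \ref{rem:tr}(1), so one composes $F^{\rm pretr}$ with a chosen quasi-inverse. You instead rebuild this functor by hand, in the style of \cite{BK}, by induction along the filtration $\mathcal{A}=\mathcal{A}^{(0)}\subseteq\mathcal{A}^{(1)}\subseteq\cdots$ with matrix bookkeeping in the biproduct presentation (\ref{equ:cone}). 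The trade-off: the paper's proof is short because exactly the choices you make stage by stage are hidden inside the quasi-inverse $({\rm can}_\mathcal{B})^{-1}$ (which selects, for each object of $\mathcal{B}^{\rm pretr}$, a dg isomorphic object of $\mathcal{B}$) and inside the construction of $F^{\rm pretr}$, whereas your construction is self-contained and, unlike the paper's one-liner, explicitly addresses the uniqueness of $(\tilde{F},\theta)$ and the dg equivalence criterion, which the paper leaves implicit. The hinge that makes your bookkeeping legitimate deserves emphasis: conditions (C1)--(C3) and the suspension datum $\xi_x$ are purely equational, so \emph{any} dg functor preserves suspension and cone data strictly; this underwrites both your entrywise definition on matrices and the closure of the essential image under $\Sigma^{\pm 1}$ and cones in the ``if'' direction, and for full faithfulness it suffices to check graded bijectivity of the maps on Hom-complexes (a bijective chain map is an isomorphism of complexes), so the twisted differential on ${\rm Hom}({\rm Cone}(f), z)$ is harmless. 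One small adjustment: in the paper $\mathcal{A}^{\rm pretr}$ is by definition a full dg subcategory of ${\rm DGMod}\mbox{-}\mathcal{A}$, so your stages $\mathcal{A}^{(n)}$ should be taken as full subcategories on the canonical suspensions and cones of modules rather than as freely adjoined objects; your argument is unaffected, since it only uses the biproduct data of (\ref{equ:cone}).
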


\begin{proof}
Set $\tilde{F}=({\rm can}_\mathcal{B})^{-1} F^{\rm pretr}$.
\end{proof}

\begin{rem}
The ``correct" statement for the universal property is as follows; see \cite[Subsection 4.5]{Ke06}: for any dg category $\mathcal{B}$, there is a dg equivalence
$$\mathcal{H}om(\mathcal{A}^{\rm pretr}, \mathcal{B}) \stackrel{\sim}\longrightarrow \mathcal{H}om(\mathcal{A}, \mathcal{B}), \quad G\mapsto G {\rm can}_\mathcal{A}.$$
\end{rem}

The following easy fact is useful.

\begin{lem}\label{lem:mor-equ}
Assume that $j\colon \mathcal{B}\rightarrow \mathcal{A}$ is a fully-faithful dg functor such that $j^{\rm pretr}$ is a dg equivalence. Then the restriction functor
$${\rm res}\colon\mathbf{D}(\mathcal{A})\longrightarrow \mathbf{D}(\mathcal{B})$$
is a triangle equivalence.
\end{lem}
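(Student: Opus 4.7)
The plan is to exhibit the kernel of the restriction as zero, by transporting the equivalence of pretriangulated hulls through the universal property. Since $j$ is dg fully faithful, it is in particular quasi-fully faithful, so Proposition~\ref{prop:rec} applies and gives a recollement
\[\xymatrix{
\mathcal{K} \ar[rr]|{{\rm inc}} &&  \mathbf{D}(\mathcal{A}) \ar[rr]|{\rm res}  \ar@/_1pc/[ll] \ar@/^1pc/[ll] &&   \mathbf{D}(\mathcal{B}),   \ar@/_1pc/[ll]|{j_*} \ar@/^1pc/[ll]|{j_!}
}\]
in which $j_*$ is fully faithful. The functor ${\rm res}$ is then a triangle equivalence precisely when the kernel $\mathcal{K}$ vanishes. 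Unwinding definitions, $\mathcal{K}$ consists of those dg $\mathcal{A}$-modules $X$ with $X(j(u))$ acyclic for every $u\in {\rm obj}(\mathcal{B})$, and the remaining task is to deduce acyclicity of $X(x)$ for every $x\in {\rm obj}(\mathcal{A})$.

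For such an $X$, I would regard $X\colon \mathcal{A}\to C_{\rm dg}(k)$ as a dg functor into the exact dg category $C_{\rm dg}(k)$ and invoke the universal property of the pretriangulated hull to obtain an extension $\widetilde{X}\colon \mathcal{A}^{\rm pretr}\to C_{\rm dg}(k)$ with $\widetilde{X}\circ {\rm can}_\mathcal{A}\simeq X$. By the uniqueness part of that universal property, $\widetilde{X}\circ j^{\rm pretr}$ coincides up to natural dg isomorphism with the extension of $Xj\colon \mathcal{B}\to C_{\rm dg}(k)$ to $\mathcal{B}^{\rm pretr}$.

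Now form the class
\[
\mathcal{E}=\{\,a\in {\rm obj}(\mathcal{A}^{\rm pretr})\mid \widetilde{X}(a) \text{ is acyclic}\,\}.
\]
Since $\widetilde{X}$ is a dg functor between exact dg categories, it preserves suspensions and cones up to dg isomorphism, and acyclic complexes are closed under suspensions and cones inside $C_{\rm dg}(k)$. Hence $\mathcal{E}$ is closed under $\Sigma^{\pm 1}$, cones, and dg isomorphism. By hypothesis, $\mathcal{E}$ contains every $j(u)$ with $u\in {\rm obj}(\mathcal{B})$, so it contains the essential image of $j^{\rm pretr}$. Because $j^{\rm pretr}$ is a dg equivalence and thus dg dense, this essential image exhausts $\mathcal{A}^{\rm pretr}$; so $\mathcal{E}\supseteq {\rm obj}(\mathcal{A})$, which is the desired conclusion.

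The main point requiring care is the assertion that the universally extended dg functor $\widetilde{X}$ really sends intrinsic suspensions and cones of $\mathcal{A}^{\rm pretr}$ to the corresponding suspensions and cones in $C_{\rm dg}(k)$; once this observation is secured from dg functoriality together with the uniqueness of cones in the sense of (C1)--(C3), the closure argument for $\mathcal{E}$ is entirely formal.
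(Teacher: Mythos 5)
Your proof is correct; note that the paper itself offers no argument for this lemma at all --- it is introduced merely as an ``easy fact'' --- so there is no official proof to compare against, and your kernel-vanishing argument via Proposition~\ref{prop:rec} is the natural way to fill the gap. Two small points deserve explicit mention. (i) The assertion that $\mathcal{E}$ contains the essential image of $j^{\rm pretr}$ is not a one-step consequence of its containing the objects $j(u)$: one should pull back and apply minimality, i.e.\ observe that the class $\{\, b\in {\rm obj}(\mathcal{B}^{\rm pretr}) \mid \widetilde{X}(j^{\rm pretr}(b)) \mbox{ is acyclic} \,\}$ contains $\mathcal{B}$ and is closed under $\Sigma^{\pm 1}$ and cones --- because any dg functor between exact dg categories preserves the data (C1)--(C3) and the closed degree-one isomorphisms, cones are unique up to dg isomorphism, and acyclic complexes in $C_{\rm dg}(k)$ are closed under shifts and cones by the graded-split sequence (\ref{equ:cone}) --- so that, by the minimality in the definition of the pretriangulated hull, this class is all of $\mathcal{B}^{\rm pretr}$; dg density of $j^{\rm pretr}$ then yields $\mathcal{E}={\rm obj}(\mathcal{A}^{\rm pretr})$. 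You flag exactly this point in your closing remark, so it is a presentational rather than a mathematical gap. (ii) Invoking the universal-property lemma with target $C_{\rm dg}(k)$ is mildly delicate, since the paper's proof of that lemma (setting $\tilde{F}=({\rm can}_\mathcal{B})^{-1}F^{\rm pretr}$) tacitly treats the target as small; you can sidestep this entirely by defining $\widetilde{X}$ concretely as the restriction of $-\otimes_{\mathcal{A}}X$ to $\mathcal{A}^{\rm pretr}\subseteq {\rm DGMod}\mbox{-}\mathcal{A}$, which satisfies $\widetilde{X}(\mathcal{A}(-,x))\cong X(x)$ and, being a dg functor, preserves suspensions and cones as above. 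With these two glosses, any $X\in\mathcal{K}$ has $X(x)$ acyclic for every $x\in{\rm obj}(\mathcal{A})$, so $\mathcal{K}=0$ and the recollement exhibits ${\rm res}$ as a triangle equivalence, exactly as you claim.
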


\section{The dg quotient vs Verdier quotient}

In this section, we  recall a fundamental theorem \cite[Theorem~3.4]{Dri}, which shows that the dg quotient category enhances the classical Verdier quotient category.

\subsection{A triangle equivalence}

The following triangle equivalence shows that:

$${\boxed{\bf \mbox{dg quotient enhances Verdier quotient!}}}$$

\vskip 10pt

\begin{thm}\label{thm:3}
Let $\mathcal{B}\subseteq \mathcal{A}$ be a full dg subcategory. Assume that ${\rm Hom}_\mathcal{A}(x, U)$ is homotopically flat for each $x\in {\rm obj}(\mathcal{A})$ and $U\in {\rm obj}(\mathcal{B})$. Then the canonical functor
$$\Phi\colon \mathcal{A}^{\rm tr}/{\mathcal{B}^{\rm tr}}\stackrel{\sim}\longrightarrow (\mathcal{A}/\mathcal{B})^{\rm tr}$$
is a triangle equivalence.
\end{thm}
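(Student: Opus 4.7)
The plan is to construct $\Phi$ via the universal property of the pretriangulated hull, verify essential surjectivity by a generation argument, and reduce fully faithfulness to the recollement of Theorem~\ref{thm:2} applied to $\mathcal{B}^{\rm op}\hookrightarrow\mathcal{A}^{\rm op}$. First, I would note that each $U\in\mathcal{B}$ becomes contractible in $(\mathcal{A}/\mathcal{B})^{\rm pretr}$ because $d(\varepsilon_U)=1_U$. By the universal property of $\mathcal{A}^{\rm pretr}$, the composite $\mathcal{A}\xrightarrow{\pi}\mathcal{A}/\mathcal{B}\hookrightarrow(\mathcal{A}/\mathcal{B})^{\rm pretr}$ extends uniquely to a dg functor $\widetilde{\pi}\colon\mathcal{A}^{\rm pretr}\to(\mathcal{A}/\mathcal{B})^{\rm pretr}$; since contractibility is preserved by suspensions and cones, $\widetilde{\pi}$ sends every object of $\mathcal{B}^{\rm pretr}$ to a contractible one, so $H^0(\widetilde{\pi})$ kills $\mathcal{B}^{\rm tr}$ and factors through the Verdier quotient as $\Phi$. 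Essential surjectivity will then follow because $(\mathcal{A}/\mathcal{B})^{\rm tr}$ is generated, as a strictly full triangulated subcategory without direct summands, by the objects of $\mathcal{A}/\mathcal{B}=\mathcal{A}$, all of which already lie in the image of $\Phi$.

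For fully faithfulness I would embed everything into the perfect derived categories via Remark~\ref{rem:tr}(2), producing the commutative square
\[
\xymatrix{
\mathcal{A}^{\rm tr}/\mathcal{B}^{\rm tr} \ar[r]^-{\Phi} \ar[d] & (\mathcal{A}/\mathcal{B})^{\rm tr} \ar[d] \\
\mathbf{perf}(\mathcal{A}^{\rm op})/\mathbf{perf}(\mathcal{B}^{\rm op}) \ar[r]^-{\Psi} & \mathbf{perf}((\mathcal{A}/\mathcal{B})^{\rm op})
}
\]
whose vertical arrows are dense (triangle equivalences up to direct summands). The bottom arrow $\Psi$ is the triangle equivalence up to direct summands obtained by applying the opposite version of Theorem~\ref{thm:2} to the full subcategory $\mathcal{B}^{\rm op}\hookrightarrow\mathcal{A}^{\rm op}$, for which the hypothesis on $\mathrm{Hom}_\mathcal{A}(x,U)$ is exactly the hypothesis on $\mathrm{Hom}_{\mathcal{A}^{\rm op}}(U,x)$ needed to run the recollement argument. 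Fully faithfulness then descends: for $x,y\in\mathcal{A}$, every Verdier roof in $\mathcal{A}^{\rm tr}/\mathcal{B}^{\rm tr}$ lifts to one in the perfect quotient, and the equivalence $\Psi$ matches the Hom in $(\mathcal{A}/\mathcal{B})^{\rm tr}\subseteq\mathbf{perf}((\mathcal{A}/\mathcal{B})^{\rm op})$.

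The principal obstacle will be upgrading Theorem~\ref{thm:2} --- equivalently Lemma~\ref{lem:cone} --- from the assumption that $\mathcal{A}(x,U)$ is dg-projective over $k$ to mere homotopical flatness. The original argument uses dg-projectivity to exhibit $\mathcal{A}/\mathcal{B}^{\geq 1}(x,-)$ as a strict extension whose pieces are free modules $\mathcal{A}(U_n,-)$ tensored with the Hom-complexes $\mathcal{A}(x,U_1)\otimes_k\mathcal{A}(U_1,U_2)\otimes_k\cdots$. Under only homotopical flatness I would replace these strict extensions by distinguished triangles in $\mathbf{D}(\mathcal{A})$, using that homotopical flatness is precisely what makes these iterated tensor products compute the correct derived tensor products; the filtration quotients still belong to $\mathrm{Im}\, j_*$, the identification $\mathrm{Im}\,\mathrm{can}=\mathrm{Ker}\,\mathrm{res}$ goes through, and the remainder of the proof of Theorem~\ref{thm:2} is then unchanged.
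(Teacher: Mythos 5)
Your route is genuinely different from the paper's. The paper proves full faithfulness of $\Phi$ by a direct computation: the Hom in the Verdier quotient $\mathcal{A}^{\rm tr}/\mathcal{B}^{\rm tr}$ is written as a filtered colimit over the category $Q_y$ of morphisms $\alpha\colon y\rightarrow z$ with cone in $\mathcal{B}^{\rm tr}$, the Hom in $(\mathcal{A}/\mathcal{B})^{\rm tr}$ is analyzed through the filtration of the quotient Hom-complexes, and the discrepancy is killed by the short exact sequence $0\rightarrow {\rm Hom}_{\mathcal{A}^{\rm pretr}}(x,z)\rightarrow {\rm Hom}_{\mathcal{A}^{\rm pretr}/\mathcal{B}}(x,z)\rightarrow {\rm Hom}^{\geq 1}_{\mathcal{A}^{\rm pretr}/\mathcal{B}}(x,z)\rightarrow 0$ together with Lemma~\ref{lem:hf}, which is exactly where homotopical flatness enters. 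Your plan instead upgrades Theorem~\ref{thm:2} from dg-projective to homotopically flat Hom-complexes and descends via Remark~\ref{rem:tr}(2); this is the route the paper's own remark after Theorem~\ref{thm:3} sanctions under the stronger dg-projectivity hypothesis, and the upgrade is indeed viable: you correctly locate the crux in Lemma~\ref{lem:cone}, where the needed point is that $j_*(\mathcal{B}(U,-)\otimes_k F)\simeq \mathcal{A}(U,-)\otimes_k F$ in $\mathbf{D}(\mathcal{A})$ whenever the coefficient complex $F$ is homotopically flat, together with the fact that ${\rm Im}\, j_*$ is localizing and hence absorbs the graded-split filtered union (Milnor telescope). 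But you should say explicitly that the \emph{first} argument in the paper's proof of Theorem~\ref{thm:2} is irrecoverable under your weaker hypothesis: there, $X(U)$ is acyclic and dg-projective over $k$, hence contractible, and the contracting homotopies $s_U$ are used to lift $X$ along $\pi$; an acyclic homotopically flat complex need \emph{not} be contractible, so the entire identification ${\rm Ker}\,{\rm res}={\rm Im}\,{\rm can}$ and the full faithfulness of ``${\rm can}$'' must rest solely on the Lemma~\ref{lem:cone} argument (the ``another proof'' in the paper).

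There is also a concrete variance error in your reduction. Theorem~\ref{thm:2} applied verbatim to $\mathcal{B}^{\rm op}\subseteq \mathcal{A}^{\rm op}$ requires ${\rm Hom}_{\mathcal{A}^{\rm op}}(x,U)={\rm Hom}_{\mathcal{A}}(U,x)$ to be homotopically flat --- the hypothesis of the \emph{dual} statement in part (1) of the remark following Theorem~\ref{thm:3}, not the hypothesis of Theorem~\ref{thm:3} itself; your claim that the needed input is ${\rm Hom}_{\mathcal{A}^{\rm op}}(U,x)$ has the variables reversed. One sees this in the filtration: as a right dg module, ${\mathcal{A}/\mathcal{B}}(-,x)$ has subquotients of the form $\mathcal{A}(-,U_1)\otimes_k F$ with $F$ built from $\mathcal{A}(U_1,U_2),\ldots,\mathcal{A}(U_n,x)$, i.e.\ Hom's \emph{out of} objects of $\mathcal{B}$. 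The repair is to run your upgraded Theorem~\ref{thm:2} on $\mathcal{B}\subseteq\mathcal{A}$ itself (left modules, where the hypothesis $\mathcal{A}(x,U)$ homotopically flat is exactly right), obtaining $\mathbf{perf}(\mathcal{A})/\mathbf{perf}(\mathcal{B})\simeq \mathbf{perf}(\mathcal{A}/\mathcal{B})$ up to summands, and then to pass to opposites at the level of hulls, using $\mathcal{A}^{\rm tr}\simeq ((\mathcal{A}^{\rm op})^{\rm tr})^{\rm op}$, $(\mathcal{A}/\mathcal{B})^{\rm op}\simeq \mathcal{A}^{\rm op}/\mathcal{B}^{\rm op}$ and the dense inclusion $(\mathcal{A}^{\rm op})^{\rm tr}\subseteq \mathbf{perf}(\mathcal{A})$. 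Finally, the descent of full faithfulness along these dense embeddings is not automatic: a roof in the perfect quotient whose cone is only a direct summand of an object of the smaller hull must be corrected by adding a shifted complement (the standard Thomason--Neeman maneuver) before Hom's can be compared. With these repairs your proof closes; what the paper's direct computation buys is independence from the derived-category recollement and from the dense-descent step, while your route, once fixed, yields the homotopically flat strengthening of Theorem~\ref{thm:2} as a byproduct.
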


\begin{rem}
\begin{enumerate}
\item By duality, the equivalence also holds if  ${\rm Hom}_\mathcal{A}(U, x)$ is homotopically flat for each $x\in {\rm obj}(\mathcal{A})$ and $U\in {\rm obj}(\mathcal{B})$.
    \item Assume that ${\rm Hom}_\mathcal{A}(x, U)$ is dg-projective for each $x\in {\rm obj}(\mathcal{A})$ and $U\in {\rm obj}(\mathcal{B})$, or dually, ${\rm Hom}_\mathcal{A}(U, x)$ is dg-projective for each $x\in {\rm obj}(\mathcal{A})$ and $U\in {\rm obj}(\mathcal{B})$. Thanks to Remark \ref{rem:tr}(2), the above equivalence follows from the equivalence in Theorem \ref{thm:2}.
\end{enumerate}
\end{rem}

The following notions, due to \cite{BK}, are very convenient for the study of the homotopy category $\mathbf{Hodgcat}$. By definition, $\mathbf{Hodgcat}$ is the localization of $\mathbf{dgcat}$ with respect to quasi-equivalences.

\begin{defn}\label{defn:pret}
\begin{enumerate}
\item A dg category $\mathcal{A}$ is \emph{pretriangulated}, if the canonical embedding $\mathcal{A}\rightarrow \mathcal{A}^{\rm pretr}$ is a quasi-equivalence, or equivalently, $H^0(\mathcal{A})$ is a triangulated subcategory of $H^0({\rm DGMod}\mbox{-}\mathcal{A})$ via the Yoneda embedding.
    \item A \emph{dg enhancement} of a triangulated category $\mathcal{T}$ is a pretriangulated dg category $\mathcal{A}$ together with a triangle equivalence $E\colon \mathcal{T}\rightarrow H^0(\mathcal{A})$.
    \end{enumerate}
\end{defn}

\begin{rem}\label{rem:confu}
Let us try to clarify the confusion of terminologies in the literature. Exact dg categories are called $+$-pretriangulated in \cite[p.105, Remark]{BK}, strongly pretriangulated in \cite[p.650]{Dri} and \cite[p.1475]{BLL},  and pretriangulated in \cite[Subsection~4.5]{Ke06}. The common terminology in Definition~\ref{defn:pret}(1) is taken from \cite[p.650]{Dri} and \cite[p.1475]{BLL}.
\end{rem}

\begin{rem}
 \begin{enumerate}
 \item Let $F\colon \mathcal{A}\rightarrow \mathcal{B}$ be a quasi-equivalence. Then $\mathcal{A}$ is pretriangulated if and only if so is $\mathcal{B}$.
 \item A triangulated category has a dg enhancement if and only if it is algebraic. The uniqueness of dg enhancements is an active topic; see \cite{CS}.
     \end{enumerate}
\end{rem}

In practice, the following immediate consequence will be useful.

\begin{cor}\label{cor:quo}
Let $\mathcal{A}$ be a pretriangulated dg category and $\mathcal{B}\subseteq \mathcal{A}$ an pretriangulated dg full subcategory.  Assume that ${\rm Hom}_\mathcal{A}(x, U)$ is homotopically flat for each $x\in {\rm obj}(\mathcal{A})$ and $U\in {\rm obj}(\mathcal{B})$. Then $\mathcal{A}/\mathcal{B}$ is pretriangulated. Moreover, the canonical functor
$$H^0(\mathcal{A})/{H^0(\mathcal{B})}\stackrel{\sim}\longrightarrow H^0(\mathcal{A}/\mathcal{B})$$
is an isomorphism between triangulated categories.
\end{cor}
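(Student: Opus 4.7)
The plan is to reduce the corollary to Theorem \ref{thm:3}. Since $\mathcal{A}$ is pretriangulated, the canonical embedding $\mathcal{A}\hookrightarrow \mathcal{A}^{\rm pretr}$ is a quasi-equivalence, so taking $H^0$ gives a triangle equivalence $H^0(\mathcal{A})\simeq \mathcal{A}^{\rm tr}$; similarly $H^0(\mathcal{B})\simeq \mathcal{B}^{\rm tr}$, under which the latter becomes a triangulated subcategory of the former. The homotopical flatness hypothesis is precisely what Theorem \ref{thm:3} requires, so applying it yields a triangle equivalence
$$\Phi\colon H^0(\mathcal{A})/H^0(\mathcal{B})\;\simeq\;\mathcal{A}^{\rm tr}/\mathcal{B}^{\rm tr}\;\xrightarrow{\sim}\;(\mathcal{A}/\mathcal{B})^{\rm tr}\;=\;H^0((\mathcal{A}/\mathcal{B})^{\rm pretr}).$$

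Next, I will identify $\Phi$ as a composition of two natural functors. The projection $\pi\colon \mathcal{A}\to\mathcal{A}/\mathcal{B}$ makes each object $U\in\mathrm{obj}(\mathcal{B})$ contractible, because the freely added morphism $\varepsilon_U$ with $d(\varepsilon_U)=1_U$ is a contracting homotopy of $1_U$. Hence $H^0(\pi)$ annihilates $H^0(\mathcal{B})$ and factors through a canonical triangle functor
$$\bar\pi\colon H^0(\mathcal{A})/H^0(\mathcal{B})\longrightarrow H^0(\mathcal{A}/\mathcal{B}).$$
Composing $\bar\pi$ with the Yoneda embedding $\iota\colon H^0(\mathcal{A}/\mathcal{B})\hookrightarrow H^0((\mathcal{A}/\mathcal{B})^{\rm pretr})$ should recover $\Phi$, essentially tautologically on representables $\mathcal{A}(x,-)$, since Drinfeld's equivalence in Theorem \ref{thm:3} is built inside ${\rm DGMod}\text{-}(\mathcal{A}/\mathcal{B})$ from precisely these.

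Finally, the conclusion is formal. Since $\iota$ is fully faithful and $\iota\circ\bar\pi=\Phi$ is a triangle equivalence, $\bar\pi$ is fully faithful; moreover, the essential image of $\iota$ contains the essential image of $\Phi$, which is everything, so $\iota$ is itself an equivalence. This is by definition the statement that $\mathcal{A}/\mathcal{B}$ is pretriangulated. Since $\mathcal{A}$ and $\mathcal{A}/\mathcal{B}$ have the same objects, $\bar\pi$ is also essentially surjective, hence a triangle equivalence, which is the displayed canonical isomorphism. The main obstacle I anticipate is the compatibility check in the previous paragraph: verifying that the equivalence $\Phi$ delivered by Theorem \ref{thm:3}, a priori defined between the hulls, genuinely factors through $H^0(\mathcal{A}/\mathcal{B})$ via $\bar\pi$. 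This is not deep but requires unwinding how $\Phi$ is manufactured from the universal property of the dg quotient and the Yoneda embedding, so that on generators of the form $\mathcal{A}(x,-)$ the factorization is visible by inspection.
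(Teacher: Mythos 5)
Your proposal is correct and takes essentially the same route as the paper: identify $H^0(\mathcal{A})$ with $\mathcal{A}^{\rm tr}$ and $H^0(\mathcal{B})$ with $\mathcal{B}^{\rm tr}$, apply Theorem \ref{thm:3}, and use that the canonical functor is the identity on objects, so that full faithfulness upgrades it to an isomorphism rather than a mere equivalence. The factorization $\Phi=\iota\circ\bar{\pi}$ that you flag as the main compatibility check is exactly what the paper's terse proof leaves implicit, and your verification of it (via the universal property of the Verdier quotient and the behaviour of $\Phi$ on the representables $\mathcal{A}(x,-)$) is sound.
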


\begin{proof}
We identify $H^0(\mathcal{A})$ with $\mathcal{A}^{\rm tr}$, $H^0(\mathcal{B})$ with $\mathcal{B}^{\rm tr}$. For the isomorphism, we mention that the canonical functor acts on objects by the identity.
\end{proof}

As the bounded derived category of a module category is the central object in homological algebra, the following concept should be central in dg categories: it provides a canonical dg enhancement for the bounded derived category.

\begin{exm}
Let $\mathcal{E}$ be an exact category. Denote by $C^b_{\rm dg}(\mathcal{E})$ the dg category of bounded complexes in $\mathcal{E}$, and by $C^{b, {\rm ac}}_{\rm dg}(\mathcal{E})$ the dg subcategory formed by acyclic complexes. Then the dg quotient
$$\mathbf{D}^b_{\rm dg}(\mathcal{E})=C^b_{\rm dg}(\mathcal{E})/{C^{b, {\rm ac}}_{\rm dg}(\mathcal{E})}$$
is called the \emph{bounded dg derived category} of $\mathcal{E}$. By Corollary \ref{cor:quo}, under suitable conditions, the category $\mathbf{D}^b_{\rm dg}(\mathcal{E})$ is pretriangulated, and  we have an isomorphism of triangulated categories
$$\mathbf{D}^b(\mathcal{E})\stackrel{\sim}\longrightarrow H^0(\mathbf{D}^b_{\rm dg}(\mathcal{E})).$$
Therefore, the dg derived category canonically enhances the derived category.
\end{exm}

\subsection{A sketched proof}

We now sketch the proof. For details, we refer to \cite[Section 8]{Dri}.

\vskip 5pt

\noindent \emph{Proof of Theorem \ref{thm:3}.} \quad It suffices to show that $\Phi$ is fully faithful. Furthermore, it suffices to show that the natural map
$${\rm Ext}^i_{\mathcal{A}^{\rm tr}/\mathcal{B}^{\rm tr}}(x, y) \longrightarrow {\rm Ext}^i_{(\mathcal{A}/\mathcal{B})^{\rm tr}}(x, y)$$
is an isomorphism for $i\in \mathbb{Z}$, and $x, y\in {\rm obj}(\mathcal{A})$.

Set $Q_y$ to be the filtered category of $\mathcal{A}^{\rm tr}$-morphisms $\alpha\colon y\rightarrow z$ with ${\rm Cone}(\alpha)\in \mathcal{B}^{\rm tr}$. Then
\begin{align*}
LHS &= {\rm colim}_{\alpha\in Q_y} \; {\rm Ext}_{\mathcal{A}^{\rm tr}}^i (x, z)\\
    &= {\rm colim}_{\alpha\in Q_y} \; H^i {\rm Hom}_{\mathcal{A}^{\rm pretr}}(x, z).
\end{align*}

WARNING: Here, $H^i$ and ${\rm colim}$ can not commute, since $Q_y$ is a not filtered category of morphisms in $Z^0(\mathcal{A}^{\rm pretr})$!

\begin{align*}
RHS &= H^i{\rm Hom}_{\mathcal{A}/\mathcal{B}} (x, y)\\
    &= H^i {\rm Hom}_{\mathcal{A}^{\rm pretr}/\mathcal{B}} (x, y).
\end{align*}

For every $\alpha\colon y\rightarrow z\in Q_y$, we assume $\alpha=\bar{\alpha'}$, $\alpha'\in Z^0(\mathcal{A}^{\rm pretr})$. So, ${\rm Cone}(\alpha')$ lies in $\mathcal{B}^{\rm pretr}$ up to homotopy. We have a quasi-isomorphism
$${\rm Hom}_{\mathcal{A}^{\rm pretr}/\mathcal{B}}(x, y) \longrightarrow {\rm Hom}_{\mathcal{A}^{\rm pretr}/\mathcal{B}}(x, z),$$
as ${\rm Hom}_{\mathcal{A}^{\rm pretr}/\mathcal{B}}(x, {\rm Cone}(\alpha'))$ is acyclic (up to homotopy, ${\rm Cone}(\alpha')$ is an iterated extension of objects in  $\mathcal{B}$;  each $U\in {\rm obj}(\mathcal{B})$ is contractible in $\mathcal{A}^{\rm pretr}/\mathcal{B}$!).

So, we have
$$RHS={\rm colim}_{\alpha\in Q_y}\;  H^i {\rm Hom}_{\mathcal{A}^{\rm pretr}/\mathcal{B}}(x, z).$$

 Recall the following  exact sequence
$$0\longrightarrow {\rm Hom}_{\mathcal{A}^{\rm pretr}}(x, z)  \longrightarrow {\rm Hom}_{\mathcal{A}^{\rm pretr}/\mathcal{B}}(x, z) \longrightarrow  {\rm Hom}^{\geq 1}_{\mathcal{A}^{\rm pretr}/\mathcal{B}}(x, z) \longrightarrow 0.$$
Taking $H^i$ and then ${\rm colim}_{\alpha\in Q_y}$, we will be done by proving
$${\rm colim}_{\alpha\in Q_y}\;  H^i {\rm Hom}^{\geq 1}_{\mathcal{A}^{\rm pretr}/\mathcal{B}}(x, z)=0.$$

Using the filtration, the above equality follows from the following claim
$${\rm colim}_{\alpha\in Q_y}\;  H^i {\rm Hom}^n_{\mathcal{A}^{\rm pretr}/\mathcal{B}}(x, z)=0$$
for each $n\geq 1$. We observe that
$${\rm Hom}^n_{\mathcal{A}^{\rm pretr}/\mathcal{B}}(x, z)=\bigoplus_{U\in {\rm obj}(\mathcal{B})} {\rm Hom}_{\mathcal{A}^{\rm pretr}}(U, z)\otimes F_{x, U},$$
where the complex $F_{x, U}$ is homotopically flat.
But, we do have
$${\rm colim}_{\alpha\in Q_y} \; H^i{\rm Hom}_{\mathcal{A}^{\rm pretr}}(U, z)={\rm Ext}^i_{\mathcal{A}^{\rm tr}/{\mathcal{B}^{\rm tr}}}(U, y)=0.$$
Then the claim follows from Lemma \ref{lem:hf}, a general fact. \hfill $\square$

\vskip 5pt

\begin{lem}\label{lem:hf}
Let $\{C_\alpha\}_{\alpha\in \Lambda}$ be a filtered system of objects in $\mathbf{K}(k\mbox{-}{\rm Mod})$. Assume that ${\rm colim}_{\alpha\in \Lambda} H^i(C_\alpha)=0$ for each $i\in \mathbb{Z}$, and that $F$ is a homotopically flat complex. Then ${\rm colim}_{\alpha\in \Lambda} H^i(C_\alpha\otimes F)=0$ for each $i\in \mathbb{Z}$.
\end{lem}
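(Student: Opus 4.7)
The plan is to reduce the assertion to the defining property of a homotopically flat complex, namely that tensoring with it preserves acyclicity. The only real subtlety is that $\{C_\alpha\}$ lives in the homotopy category, whereas the argument is naturally performed at the level of actual complexes.

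First I would lift the filtered diagram $\{C_\alpha\}_{\alpha\in \Lambda}$ to a strict diagram in $\mathbf{C}(k\mbox{-}{\rm Mod})$, for instance by replacing each $C_\alpha$ by a K-projective (or merely termwise-projective) resolution and then rigidifying the transition maps to honest chain maps. Set $C = {\rm colim}_{\alpha\in\Lambda} C_\alpha$, the ordinary colimit of complexes. Since filtered colimits are exact in $k\mbox{-}{\rm Mod}$, they commute with cohomology, so
$$H^i(C) = {\rm colim}_{\alpha\in\Lambda} H^i(C_\alpha) = 0$$
for every $i\in\mathbb{Z}$ by hypothesis; that is, $C$ is acyclic. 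The homotopical flatness of $F$ then forces $C\otimes F$ to be acyclic. Because tensor products commute with filtered colimits, $C\otimes F \simeq {\rm colim}_\alpha (C_\alpha\otimes F)$, and invoking exactness of filtered colimits once more yields
$$0 = H^i(C\otimes F) = {\rm colim}_{\alpha\in\Lambda} H^i(C_\alpha\otimes F),$$
which is the desired vanishing.

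The step I expect to require most care is the preliminary strictification, since what is given is only a homotopy-coherent diagram. This is handled by the K-projective replacement above: quasi-isomorphism of each $C_\alpha$ with its resolution, combined with K-flatness of $F$, ensures that both the hypothesis $\operatorname{colim} H^i(C_\alpha)=0$ and the conclusion $\operatorname{colim} H^i(C_\alpha\otimes F)=0$ are unaffected by the replacement. Once strict representatives are fixed, the rest is a one-line consequence of exactness of filtered colimits together with the property defining a homotopically flat complex, so no further ingredient beyond those two is needed.
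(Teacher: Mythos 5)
Your argument after the strictification step is sound, but the strictification itself is a genuine gap, and it is precisely the crux of the lemma. A filtered system $\{C_\alpha\}_{\alpha\in\Lambda}$ in $\mathbf{K}(k\mbox{-}{\rm Mod})$ is only a functor into the homotopy category. Replacing each $C_\alpha$ by a K-projective resolution $P_\alpha$ lets you lift each individual transition map to an honest chain map, but for $\alpha\to\beta\to\gamma$ the lift of the composite agrees with the composite of the lifts only up to homotopy; making all such choices strictly compatible is a homotopy-coherence problem that is obstructed for a general filtered $\Lambda$ (it is solvable for $\mathbb{N}$, a free category, but not in general, and in the intended application $\Lambda=Q_y$ is an arbitrary filtered category of morphisms in a triangulated category). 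Your remark that the replacement leaves the hypothesis and conclusion unchanged addresses invariance under objectwise quasi-isomorphism, not the existence of a strict diagram, so the colimit complex $C={\rm colim}_\alpha C_\alpha$ on which your whole proof rests is never constructed. Note that the paper itself flags exactly this issue: the WARNING in the proof of Theorem \ref{thm:3} states that $H^i$ and ${\rm colim}$ cannot be interchanged because $Q_y$ is \emph{not} a filtered category of morphisms in $Z^0(\mathcal{A}^{\rm pretr})$. If your strictification were available, that warning would be vacuous and Theorem \ref{thm:3} would become nearly trivial; the lemma is quoted as a separate ``general fact'' (proved in \cite[Section 8]{Dri}) precisely because one must argue without ever forming ${\rm colim}_\alpha C_\alpha$.

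The standard repair is to resolve the single object $F$ instead of the system. Choose a semi-free resolution $P\to F$; its cone is acyclic and homotopically flat, and an acyclic homotopically flat complex stays acyclic after tensoring with \emph{any} complex, so $H^i(C_\alpha\otimes F)\cong H^i(C_\alpha\otimes P)$ naturally in $\alpha$. Now write $P=\bigcup_{n\geq 0}P_n$ with each inclusion $P_{n-1}\subseteq P_n$ degreewise split and $P_n/P_{n-1}$ a direct sum of shifted copies of $k$. For $P_n/P_{n-1}$ the claim ${\rm colim}_\alpha H^i(C_\alpha\otimes P_n/P_{n-1})=0$ is immediate from the hypothesis, since $C_\alpha\otimes(P_n/P_{n-1})$ is a direct sum of shifts of $C_\alpha$ and cohomology, direct sums and filtered colimits of abelian groups all commute; induction on $n$ via the degreewise split short exact sequences, the resulting long exact sequences (which are functorial on $\mathbf{K}(k\mbox{-}{\rm Mod})$, hence compatible with the homotopy-level transition maps), and exactness of filtered colimits gives ${\rm colim}_\alpha H^i(C_\alpha\otimes P_n)=0$ for all $n$. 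Finally $H^i(C_\alpha\otimes P)={\rm colim}_n H^i(C_\alpha\otimes P_n)$ is a \emph{strict} colimit over $\mathbb{N}$, and interchanging the two filtered colimits yields ${\rm colim}_\alpha H^i(C_\alpha\otimes F)=0$. The point is that $\{C_\alpha\}$ is only ever touched through functors $H^i(-\otimes X)$ defined on the homotopy category, where it actually lives.
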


\begin{rem}
The assumptions on the Hom-complexes in Theorems \ref{thm:2} and \ref{thm:3} might be ``removed". Indeed, it suffices to take a cofibrant replacement of $\mathcal{A}$. Similarly, to define the dg quotient category in the most general case, we have to replace $\mathcal{A}$ by its cofibrant replacement and then freely add the contracting homotopies as in Definition \ref{defn:dgq}.
\end{rem}

\printindex

\end{document}